\newcommand{\ts}{\textsuperscript}
\newtheorem{theorem}{Theorem}
\newtheorem{lemma}{Lemma}
\newcommand{\discr}[1]{\underline{#1}}
\def\dirtree@growth{%
  \ifnum\tikznumberofcurrentchild=1\relax
  \global\advance\dirtree@plvl by 1
  \expandafter\xdef\csname dirtree@p@\the\dirtree@plvl\endcsname{\the\dirtree@lvl}
  \fi
  \global\advance\dirtree@lvl by 1\relax
  \dirtree@clvl=\dirtree@lvl
  \advance\dirtree@clvl by -\csname dirtree@p@\the\dirtree@plvl\endcsname
  \pgf@xa=0.7cm\relax
  \pgf@ya=-0.7cm\relax
  \pgf@ya=\dirtree@clvl\pgf@ya
  \pgftransformshift{\pgfqpoint{\the\pgf@xa}{\the\pgf@ya}}%
  \ifnum\tikznumberofcurrentchild=\tikznumberofchildren
  \global\advance\dirtree@plvl by -1
  \fi
}
\tikzset{
  dirtree/.style={
    growth function=\dirtree@growth,
    every node/.style={text centered,draw=black,thick,anchor=west,minimum height=1.5em,text height=0.8em},
    every child node/.style={anchor=west},
    edge from parent path={(\tikzparentnode\tikzparentanchor) |- (\tikzchildnode\tikzchildanchor)}
  }
}
\title[Diagonalization-Based PinT Preconditioners for Flow Control]{Diagonalization-Based Parallel-in-Time Preconditioners for Instationary Fluid Flow Control Problems}
\author{Bernhard Heinzelreiter and John W. Pearson}
\date{}
\begin{document}

\begin{abstract}
    We derive a new parallel-in-time approach for solving large-scale optimization problems constrained by time-dependent partial differential equations arising from fluid dynamics. The solver involves the use of a block circulant approximation of the original matrices, enabling parallelization-in-time via the use of fast Fourier transforms, and we devise bespoke matrix approximations which may be applied within this framework. These make use of permutations, saddle-point approximations, commutator arguments, as well as inner solvers such as the Uzawa method, Chebyshev semi-iteration, and multigrid. Theoretical results underpin our strategy of applying a block circulant strategy, and numerical experiments demonstrate the effectiveness and robustness of our approach on Stokes and Oseen problems. Noteably, satisfying results for the strong and weak scaling of our methods are provided within a fully parallel architecture.
\end{abstract}

\maketitle

\section{Introduction}
The field of partial differential equation (PDE)-constrained optimization problems has received great attention during the past few decades. This type of optimization problem arises widely in science and engineering, and has utility in many industrial processes. An overview of theory for such problems, their applications, and numerical methods for their solution can be found in e.g., \cite{Lions1971OptimalEquations,Troltzsch2009OptimalApplications,Hinze2009OptimizationConstraints}. Among a wide range of applications of PDE-constrained optimization, our focus is on fluid flow control problems, see e.g., \cite{Gunzburger2002PerspectivesOptimization,BieglerRealTime2007,Hazra2010Large-ScaleApplications}. Specifically, we will discuss the all-at-once solution of unsteady Stokes and Oseen control problems.

Since analytical solutions to these problems are not known in general, one is required to solve them numerically. However, their discretization often results in huge-scale systems of linear or linearized equations. The number of degrees of freedom sometimes scales poorly with the accuracy of the discretization. Off-the-shelf solvers, such as direct solvers for linear systems, often have storage requirements that are excessive for problems that are sufficiently finely-discretized to achieve high accuracy. In order to make a numerical solver feasible and effective, information about the linear system itself and the structure of the PDEs must frequently be taken into account when designing it. In recent years, preconditioned iterative methods have been successfully applied to PDE-constrained optimization problems \cite{Schoberl2007SymmetricProblems,Schoberl2011AProblems,Rees2010OptimalOptimization, Pearson2012AOptimization,Pearson2013FastProblems,Pearson2013FastProcesses,McDonald2018PreconditioningEquations}, and, in particular, bespoke robust preconditioners for stationary and instationary Stokes and Oseen problems have enabled the fast and robust solution of flow control problems \cite{Silvester2001EfficientFlow,Kay2002AEquations,Krendl2013StabilityProblems,Krendl2015EfficientEquations}. Recent research has made it possible to tackle huge-scale non-linear time-dependent problems and solve them to high accuracy, see e.g., \cite{Danieli2022Space-timeFlow,Leveque2022FastTime,Hinze2012AControl,Janssens2024Parallel-in-timeEquations}.

A key challenge in these fields is that there is a pressing need to model and simulate increasingly complicated and fine-scale physical phenomena, but in previous decades CPUs have not experienced a significant increase in speed and are also not expected to do so in the future~\cite{Sutter2005TheSoftware}. However, processing architectures with increasing numbers of concurrent processing units have become more available. This in turn has increased the interest in parallelization of numerical methods in order to increase the range of problems they are capable of coping with, to resolve many of these numerical challenges. These include {\it parallel-in-time} methods, which are numerical methods for solving time-dependent problems that are designed to be parallelizable; a good overview of such methods can be found in~\cite{Gander201550Integration}. A number of approaches within this family have been suggested. Methods such as Parareal~\cite{Lions2001Resolutionparareel} and ParaDiag~\cite{Gander2021ParaDiag:Technique} have been shown to accelerate the solution of evolution equations, but these must be adapted to become applicable when considering optimality systems, as they occur in time-dependent PDE-constrained optimization problems. Examples of parallel-in-time methods which have been successfully applied to optimization problems involving PDEs may be found in~\cite{Gander2020ParaOpt:Systems,Gotschel2019AnPDEs,Bouillon2023OnControl,Wu2020AEquations,Wu2020Diagonalization-basedProblems}.

In this work, we derive a new diagonalization-based parallel-in-time approach for fluid flow control problems, which allows their rapid solution by making use of the fast Fourier transform (FFT). With suitable permutations, we may then reformulate the problem to one of designing suitable inner solvers for systems of equations arising from individual time points, for which we derive bespoke, potent approximations. We make use of theory for saddle-point preconditioners, existing robust preconditioners for flow problems based on commutator arguments \cite{Silvester2001EfficientFlow,Kay2002AEquations}, and tailored routines for individual submatrices, resulting in fast and robust preconditioners for the all-at-once solution of the Stokes and Oseen problems. To our knowledge, this is the first parallelizable-in-time preconditioner for either of these problems, which demonstrates very good (strong and weak) scaling properties in a practical parallel architecture.


The paper is structured as follows. In Section~\ref{seq:ProblemFormulation}, the flow control problems we consider are introduced. Their first-order optimality conditions are stated and the resulting systems of discretized equations are derived. Moreover, basic tools for the numerical analysis of saddle-point systems are summarized for the general case, but also in the context of flow problems. Section~\ref{seq:PreconditionerStokes} discusses the diagonalization of the system of equations. It is presented how adaptations of existing preconditioners for flow control problems can be embedded into this framework. Section~\ref{seq:PreconditionerOseen} generalizes these approaches to the Oseen problem. In Section~\ref{seq:NumericalExperiments}, we present numerical results, including results run in parallel on problems with more than 76 million degrees of freedom. Finally, concluding remarks are given in Section~\ref{seq:Conclusion}.

\section{Problem formulation}
\label{seq:ProblemFormulation}

In this paper, we develop a numerical method for solving instationary Stokes and Oseen flow control problems. Specifically, we consider an optimization problem that aims to find a distributed control minimizing the least-squares distance of the velocity field from a desired velocity governed by a Stokes or an Oseen flow. Additionally, the problem is regularized by penalizing the control in the cost functional. Since the Stokes flow is a special case of the Oseen flow, we will first introduce the problem for the latter and then highlight the difference between the two settings.

The Oseen flow control problem can be formulated more concretely as the minimization of the functional
\begin{align}
    \min_{v, p, u} \quad J(v, u) = \frac{1}{2} \int_0^T \int_\Omega \| v(x, t) - v_d(x, t) \|^2 \mathrm{d}x \mathrm{d}t + \frac{\beta}{2} \int_0^T\int_\Omega \| u(x, t) \|^2 \mathrm{d}x \mathrm{d}t,
    \label{eq:OseenOptimization}
\end{align}
subject to the equations
\begin{align}
\left\{
\begin{alignedat}{6}
    \text{s.t.} &\quad &\frac{\partial v}{\partial t} - \nu \Delta v + w(x) \cdot \nabla v + \nabla p &= u + f(x, t) &\quad&\text{~~in }\Omega \times (0, T), \\
                &      & -\nabla \cdot v &= 0  &&\text{~~in } \Omega \times (0, T), \\
                &      & v(x, t) &= h(x, t) &&\text{~~on } \partial \Omega \times (0, T), \\
                &      & v(x, 0) &= v_0(x)  &&\text{~~in } \Omega,
\end{alignedat}
\right.
\label{eq:OseenFlow}
\end{align}
where $\Omega \subset \mathbb{R}^d$ is a bounded domain of dimension $d=2, 3$. The function $v$ represents the vector velocity field, $p$ the scalar pressure field, and $u$ the control variable. The desired velocity state is denoted as $v_d$. The function $w$ is referred to as the {\it wind}, which is a distinctive feature to the Oseen flow, and $\nu$ is the viscosity. We will assume that the wind is divergence-free, i.e., $\nabla \cdot w(x) = 0$ in $x \in \Omega$. The parameter $\beta > 0$ is the regularization parameter and determines the weight of the control costs. 
The Stokes flow control problem is achieved if no wind is present, i.e., $w = 0$.

Equation \eqref{eq:OseenFlow} is a set of partial differential equations in strong form. In order to construct finite element approximations, we will consider the weak formulation of these equations. This requires us to add the assumption that $\Omega$ is a Lipschitz domain and to consider the function spaces $v \in H^1(\Omega)^d$, $p \in L^2(\Omega)$, and $u \in L^2(\Omega)^d$. For a detailed discussion of the weak form for the Stokes problem, the reader is referred to \cite[Chapter~3]{Elman2014FiniteSolvers}. The Oseen flow can be treated similarly. For the reader's convenience, the following derivations and equations are presented in the strong form of the equations, however, these can be carried over to the weak form with only a few adaptations. 

\subsection{Optimality conditions}
The formulation~\eqref{eq:OseenOptimization}--\eqref{eq:OseenFlow} represents a convex optimization problem. This means it is sufficient to consider the first-order optimality conditions in order to solve the problem. These are given by the following set of equations:
\begin{align}
&\left\{
\begin{alignedat}{4}
    \mathrlap{\hspace{1em} \frac{\partial v}{\partial t} - \nu \Delta v + w(x) \cdot \nabla v + \nabla p} 
    \hphantom{-\frac{\partial \lambda}{\partial t} - \nu \Delta \lambda + w(x) \nabla \cdot \lambda + \nabla \mu}
    &= \frac{1}{\beta} \lambda + f(x, t) &&\quad\text{ in } \Omega \times (0, T), \\
    -\nabla \cdot v &= 0&&\quad\text{ in } \Omega \times (0, T), \\
    v(x, t) &= h(x, t)&&\quad\text{ on } \partial\Omega \times (0, T) \\
    v(x, 0) &= v_0(x)&&\quad\text{ on } \Omega,
\end{alignedat}\right. 
\label{eq:StateEquation}\\
&\left\{
\begin{alignedat}{4}
    -\frac{\partial \lambda}{\partial t} - \nu \Delta \lambda - w(x) \cdot \nabla \lambda + \nabla \mu
     &= v_d - v&&\quad\text{ in } \Omega \times (0, T), \\
    -\nabla \cdot \lambda &= 0&&\quad\text{ in } \Omega \times (0, T), \\
    \lambda(x, t) &= 0&&\quad\text{ on } \partial\Omega \times (0, T), \\
    \lambda(x, T) &= 0&&\quad\text{ on } \Omega.
\end{alignedat}\right.
\label{eq:AdjointEquation}
\end{align}
Equations \eqref{eq:StateEquation} and \eqref{eq:AdjointEquation} are the {\it state} and {\it adjoint equations}, respectively. When deriving the optimality conditions, one would additionally obtain the gradient equation
\begin{align*}
    \beta u - \lambda = 0,
\end{align*}
which has already been substituted in to obtain the above state equations. Overall, the solution of the optimization problem has been narrowed down to solving the coupled PDEs \eqref{eq:StateEquation} and \eqref{eq:AdjointEquation}. 

\subsection{Discretization}
The optimality conditions are discretized with a finite element method in space. The function approximation spaces have to be chosen carefully to achieve stability of the numerical method. We will restrict ourselves to two stable choices of elements, which are $P_2$-$P_1$-elements, so-called Taylor-Hood elements, and $Q_2$-$Q_1$-elements. The properties of these discretizations are discussed in \cite[p.~133 ff.\ and p.~149 ff.]{Elman2014FiniteSolvers}.
In time, the equations are discretized with the backward Euler method. For the temporal discretization, it has to be taken into account that the state equation is to be considered forward in time and the adjoint equation backward in time. The number of time points is $n_t + 1$ for some $n_t \in \mathbb{N}$, which leads to a timestep of $\tau = T / n_t$. This gives us the equations
\begin{align*}
    M \discr{v}^{(j + 1)} - M \discr{v}^{(j)} + \tau L \discr{v}^{(j + 1)} + \tau B^T \discr{p}^{(j + 1)} &= \frac{\tau}{\beta} M \discr{\lambda}^{(j + 1)} + \discr{h}^{(j + 1)} \quad\text{for } j \in \{0, \dots, n_t - 1\}, \\
    \tau B \discr{v}^{(j + 1)} &= 0 \quad\text{for } j \in \{0, \dots, n_t - 1\},\\
    M \discr{\lambda}^{(j - 1)} - M \discr{\lambda}^{(j)} + \tau L^T \discr{\lambda}^{(j - 1)} + \tau B^T \discr{\mu}^{(j - 1)} &= \discr{g}^{(j - 1)} + \tau M \left(\discr{v}^{(j - 1)}_d - \discr{v}^{(j - 1)} \right) \quad\text{for } j \in \{1, \dots, n_t\}, \\
    \tau B \discr{\mu}^{(j - 1)} &= 0 \quad\text{for } j \in \{1, \dots, n_t\},
\end{align*}
where superscript $j$ indicates the values at time $t = j \tau$. The variables $\discr{v}$, $\discr{p}$, $\discr{\lambda}$, and $\discr{\mu}$ denote the vectors containing the values of the spatially discretized functions. The matrix $M$ is the mass matrix in the velocity space, $L$ is the discretization of the weak form of $-\nu \Delta + w(x) \cdot \nabla$
and $B$ is the discretized negative divergence. The vectors $\discr{g}^{(\cdot)}$ and $\discr{h}^{(\cdot)}$ include boundary conditions and forcing terms. For $j = 0$ and $j = n_t$, we need to specify the initial and terminal conditions
\begin{align*}
    \discr{v}^{(0)} &= \discr{v}_0, \\
    \discr{\lambda}^{(n_t)} &= \underline{0}.
\end{align*}
It is important to note that in the above discretization the pressure field is unique only up to a constant, which means that the resulting linear system is singular. In order to prevent potential difficulties caused by this singularity, we transform the system to a regular one by fixing the pressure at one pre-defined point in space. This way the divergence matrix $B$ becomes full rank and the overall linear system invertible.

Assembling this into an all-at-once system leads to linear equations of the form
\begin{align*}
    \bar{\mathcal{A}} \begin{pmatrix}
        \discr{v} & \discr{p} & \discr{\lambda} & \discr{\mu}
    \end{pmatrix}^T = \begin{pmatrix}\discr{g} & \discr{h}\end{pmatrix}^T,
\end{align*}
where $\discr{v} = \begin{pmatrix} \discr{v}^{(1)} & \cdots & \discr{v}^{(n_t - 1)} \end{pmatrix}^T$ is the concatenation of the discretized velocity at all time points (apart from the first and last). The vectors $\discr{p}$, $\discr{\lambda}$, and $\discr{\mu}$ are defined similarly. The matrix is defined as 
\begin{align*}
\bar{\mathcal{A}} = \begin{pmatrix}
        \tau I_t \otimes M & 0 & E^T \otimes M + \tau I_t \otimes L^T & \tau I_t \otimes B^T \\
        0 & 0 & \tau I_t \otimes B & 0 \\
        E \otimes M + \tau I_t \otimes L & \tau I_t \otimes B^T & -\frac{\tau}{\beta} I_t \otimes M & 0 \\
        \tau I_t \otimes B & 0 & 0 & 0
    \end{pmatrix}.
\end{align*}
The matrix $I_t$ is the identity matrix of size $n_t - 1$, and $E \in \mathbb{R}^{(n_t - 1) \times (n_t - 1)}$ arises from the backward Euler discretization and has the form
\begin{align*}
    E = \begin{pmatrix}
        1 & & &   \\
        -1 & 1 & &  \\
         & \ddots & \ddots &    \\
         & & -1 & 1 
    \end{pmatrix}.
\end{align*}
The right-hand side $\begin{pmatrix}\discr{g} & \discr{h}\end{pmatrix}^T$ assembles all $\discr{g}^{(j)}$ and $\discr{h}^{(j)}$ and accounts for the forcing terms, boundary conditions, and the initial and terminal conditions. As we will only consider the discretized system in the following, the underlined notation for the discretized variables will be dropped for the sake of better readability. 

While $\bar{\mathcal{A}}$ results from the direct assembly of the discretized equations maintaining the original ordering, we will consider a different permutation. The linear system can be permuted to obtain a matrix of the form
\begin{align}
    \mathcal{A} = \begin{pmatrix}
        \tau I_t & E^T \\
        E & -\frac{\tau}{\beta} I_t
    \end{pmatrix} \otimes
    \begin{pmatrix}
        M & 0 \\
        0 & 0
    \end{pmatrix}
    +
    \tau \begin{pmatrix}
        0 & 0 \\
        I_t & 0
    \end{pmatrix} \otimes
    \begin{pmatrix}
        L & B^T \\
        B & 0
    \end{pmatrix}
    +
    \tau \begin{pmatrix}
        0 & I_t \\
        0 & 0
    \end{pmatrix} \otimes
    \begin{pmatrix}
        L^T & B^T \\
        B & 0
    \end{pmatrix},
    \label{eq:FinalA}
\end{align}
whose Kronecker product form makes upcoming manipulations clearer.

\subsection{Saddle-point systems}
Given that several of the linear systems under consideration exhibit a saddle-point structure, this section will briefly present key properties of generalized saddle-point systems. Saddle-point systems have been extensively studied over the past decades, with some of their findings being the basis for key ideas of this work. We are especially interested in their efficient numerical solution and will outline relevant preconditioners in this section. A detailed discussion of the analysis and numerical treatment of these systems can be found in \cite{Benzi2005NumericalProblems}. 

The (generalized) saddle-point systems discussed here are linear systems of the form
\begin{align*}
    \underbrace{\begin{pmatrix}
        \Phi & \Psi^T \\
        \Psi & -\Theta
    \end{pmatrix}}_{= \mathcal{H}}
    \begin{pmatrix}
        x_1 \\
        x_2
    \end{pmatrix} = 
    \begin{pmatrix}
        b_1 \\
        b_2
    \end{pmatrix}.
    \label{eq:SaddlePointSystem}
\end{align*}
It is assumed that the matrices $\Phi \in \mathbb{R}^{n \times n}$ and $\Theta^{m \times m}$ are symmetric positive definite and symmetric positive semi-definite, respectively. The Schur complement of $\mathcal{H}$ is defined by
\begin{align*}
    S = \Theta + \Psi \Phi^{-1} \Psi^T
\end{align*}
and forms the basis for a broad range of preconditioners for $\mathcal{H}$, since it enables the decomposition
\begin{align*}
    \mathcal{H} = \begin{pmatrix}
        I & 0 \\
        \Psi \Phi^{-1} & I
    \end{pmatrix}
    \begin{pmatrix}
        \Phi & 0 \\
        0 & S
    \end{pmatrix}
    \begin{pmatrix}
        I & \Phi^{-1} \Psi^T \\
        0 & -I
    \end{pmatrix} = 
    \begin{pmatrix}
        \Phi & 0 \\
        \Psi & S
    \end{pmatrix}
    \begin{pmatrix}
        I & \Phi^{-1} \Psi^T \\
        0 & -I
    \end{pmatrix},
\end{align*}
which in turn justifies the following choice of preconditioner:
\begin{align*}
\mathcal{P} = \begin{pmatrix}
    \Phi & 0 \\
    \Psi & S
\end{pmatrix}.
\end{align*}
The resulting preconditioned system has only two distinct eigenvalues $\lambda(\mathcal{P}^{-1}\mathcal{H}) = \{-1, 1\}$, which means that preconditioned Krylov subspace solvers, such as MINRES \cite{Paige1975SolutionEquations} or GMRES \cite{Saad1986GMRES:Systems}, converge within two iterations. In order to make the application of $\mathcal{P}^{-1}$ practical, an efficient application of $\Phi^{-1}$ and $S^{-1}$ is required, which is why instead of $\mathcal{P}$, we consider an approximation
\begin{equation}
\widehat{\mathcal{P}} = \begin{pmatrix}
    \widehat{\Phi} & 0 \\
    \Psi & \widehat{S}
\end{pmatrix}.
\label{eq:SaddlePointTriangularPreconditioner}
\end{equation}
While a good approximation of $\Phi$ can often be achieved with methods such as multigrid methods or iterative methods like Chebyshev semi-iteration \cite{Golub1961ChebyshevI,Wathen2008ChebyshevMatrix} when dealing with PDEs, the approximation of the Schur complement is a more delicate task. Since $S$ is dense in general and also expensive to compute, it is infeasible to assemble $S$ in most PDE-based applications. In certain cases, however, efficient approximations with good spectral properties can be achieved. Subsequently, we will introduce two of such approximations that will be leveraged later.

One Schur complement approximation that is used in this work was first presented in \cite{Pearson2012AOptimization,Pearson2013FastProblems} for systems that fulfill $n = m$ and $\Theta = \alpha \Phi$. The authors proposed to use the approximation
\begin{align}
    S \approx \widehat{S} = \left( \Psi + \sqrt{\alpha} \Phi \right) \Phi^{-1} \left( \Psi + \sqrt{\alpha} \Phi \right)^T.
\label{eq:SchurComplementPearsonWathen}
\end{align}
It was shown that the spectral properties are robust with respect to the parameter $\alpha$. More precisely, it was shown in \cite{Pearson2013FastProblems} that if $\Psi + \Psi^T$ is positive semi-definite, then
\begin{align*}
    \lambda\left( \widehat{S}^{-1} S \right) \in \left[\frac{1}{2}, 1\right].
\end{align*}
 
The other case in which we are particularly interested here is the saddle-point system
\begin{align*}
\mathcal{H} = \begin{pmatrix}
    L & B^T \\
    B & 0
\end{pmatrix}
\end{align*}
with the finite element matrices defined in the preceding section. A good approximation of the Schur complement is based on the idea of the commutator. This idea and its application to flow problems were proposed in e.g., \cite{Silvester2001EfficientFlow,Kay2002AEquations}, and an introductory overview is given in \cite[Section 9.2]{Elman2014FiniteSolvers}. The starting point of the argument is the assumption that the operator
\begin{align*}
    \mathcal{E} = \nabla \cdot \left(-\nu \Delta + w \cdot \nabla\right) - \left(-\nu \Delta + w \cdot \nabla\right)_p \nabla \cdot
\end{align*}
is small, which gives reason that
\begin{align*}
    S = B L^{-1} B^T \approx M_p L_p^{-1} K_p \approx \widehat{S}
\end{align*}
forms a good approximation.
The subscripts $p$ denote the corresponding differential operators and matrices in the pressure space and the matrix $K_p$ represents the finite element stiffness matrix arising from the negative Laplacian. In the case of the Stokes problem, it can be shown that the eigenvalues of the preconditioned Schur complement are bounded from above by $1$. Theoretical results on the lower bound relate to (the square of) an inf--sup constant, which is generally not known analytically. Experiments, however, have verified that in practice $\widehat{S}$ indeed approximates well (in a spectral sense) the Schur complement, cf.~\cite[p.~175]{Elman2014FiniteSolvers}. Hence, real numbers $a, b > 0$ can be found such that
\begin{align*}
    \lambda\left( \widehat{S}^{-1} S \right) \in [a, b],
\end{align*}
where $a$ and $b$ are robust with respect to the discretization and the model parameters. Thus, the block triangular preconditioner \eqref{eq:SaddlePointTriangularPreconditioner} with an exact (1,1)-block and the above Schur complement approximation has the spectral property
\begin{align*}
\left|\lambda \left(
\widehat{\mathcal{P}}^{-1} \mathcal{H}
\right)\right| \in [a, b].
\end{align*}
For the Oseen problem, such bounds cannot be established since the differential operator has complex eigenvalues in general. The preconditioner, nonetheless, provides good clustering of the eigenvalues of the preconditioned system. 

\section{Preconditioners for the Stokes control problem}
\label{seq:PreconditionerStokes}
We aim to solve equation~\eqref{eq:FinalA} with a Krylov subspace iterative solver. To apply such methods effectively, one is required to provide efficient preconditioners. We are interested in how a preconditioner can be designed to allow for parallelization among the timesteps to make systems with larger time horizons tractable. This would provide a so-called {\it parallel-in-time} preconditioner. Over the last decades, various parallel-in-time approaches have been developed for optimality systems, see e.g., \cite{Gander2020ParaOpt:Systems,Gotschel2019AnPDEs,Bouillon2023OnControl}. We will now focus on diagonalization-based approaches for the systems under consideration.

\subsection{Block diagonalization with FFT}
\label{sec:DiagonalizationFFT}
First, our goal is to obtain a diagonalized representation of $\mathcal{A}$ that can be applied cheaply. There is no inherent potential for diagonalizing (by blocks) the matrix~$\mathcal{A}$ as it is, which is why we consider a slightly perturbed matrix $\mathcal{P}_C$. The preconditioner is achieved by replacing $E$ with a low-rank perturbation $C$
\begin{align*}
    \mathcal{P}_C = \begin{pmatrix}
        \tau I_t & C^T \\
        C & -\frac{\tau}{\beta} I_t
    \end{pmatrix} \otimes
    \begin{pmatrix}
        M & 0 \\
        0 & 0
    \end{pmatrix}
    +
    \tau \begin{pmatrix}
        0 & 0 \\
        I_t & 0
    \end{pmatrix} \otimes
    \begin{pmatrix}
        L & B^T \\
        B & 0
    \end{pmatrix}
    +
    \tau \begin{pmatrix}
        0 & I_t \\
        0 & 0
    \end{pmatrix} \otimes
    \begin{pmatrix}
        L^T & B^T \\
        B & 0
    \end{pmatrix},
\end{align*}
where $C$ is given by
\begin{align*}
    C = \begin{pmatrix}
        1 & & & -1  \\
        -1 & 1 & &  \\
         & \ddots & \ddots &  \\
         & & -1 & 1 
    \end{pmatrix} \approx E.
\end{align*}
An interpretation of this is that the original Stokes problem is replaced {\it for the purposes of deriving preconditioners} with its periodic-in-time analogue. In \cite{Krendl2015EfficientEquations}, this problem is analyzed and its diagonalization is discussed. In the following, we will outline the main idea and generalize it in order to cover the case of a non-symmetric $L$, i.e., the Oseen flow. The advantage of the approximation is that $C$ is a circulant matrix, which can be diagonalized with the Fourier matrix
\begin{align*}
    C = F^{-1} D F,
\end{align*}
where $F$ may be approximated using the forward Fourier transform, $F^{-1}$ with the inverse Fourier transform, and $D = \text{diag}(F c_1)$ with $c_1$ being the first column of $C$.
This approximation and the Kronecker product form allow us to diagonalize the whole system
\begin{align*}
   \mathcal{P}_C = \mathcal{F}^{-1} \underbrace{\left( \begin{pmatrix}
        \tau I_t & D^* \\
        D & -\frac{\tau}{\beta} I_t
    \end{pmatrix}\otimes
    \begin{pmatrix}
        M & 0 \\
        0 & 0
    \end{pmatrix}
    +
    \tau \begin{pmatrix}
        0 & 0 \\
        I_t & 0
    \end{pmatrix} \otimes 
    \begin{pmatrix}
        L & B^T \\
        B & 0
    \end{pmatrix}
    +
    \tau \begin{pmatrix}
        0 & I_t \\
        0 & 0
    \end{pmatrix} \otimes 
    \begin{pmatrix}
        L^T & B^T \\
        B & 0
    \end{pmatrix}
    \right)}_{=\,\bar{\mathcal{G}}} \mathcal{F},
\end{align*}
where $\mathcal{F}$ is the block Fourier transform in time
\begin{align*}
    \mathcal{F} = \begin{pmatrix}
        F & 0 \\
        0 & F
    \end{pmatrix} \otimes
    \begin{pmatrix}
        I_{n_v} & 0 \\
        0 & I_{n_p}
    \end{pmatrix}.
\end{align*}
The integers $n_v$ and $n_p$ are the numbers of degrees of freedom in the velocity and pressure approximation spaces. 
The operator $\mathcal{F}$ can be applied in $O((n_v + n_p) n_t \log n_t$) floating point operations. 

Upon permutation, $\bar{\mathcal{G}}$ takes a block diagonal form $\mathcal{G} = \operatorname{diag}(G_1, \dots, G_{n_t - 1})$ with the blocks
\begin{align}
    G_j = \begin{pmatrix}
        \tau M & d_j^* M + \tau L^T & 0 & \tau B^T \\
        d_j M + \tau L & -\frac{\tau}{\beta} M & \tau B^T & 0 \\
        0 & \tau B & 0 & 0 \\
        \tau B & 0 & 0 & 0
    \end{pmatrix},
    \label{eq:SubBlockG}
\end{align}
where $d_j$ is the $j$\ts{th} diagonal entry of $D$.
We can now solve for each block separately, i.e., in parallel. Each of the blocks can be interpreted as a version of a time-independent Stokes problem. Hence, we have arrived at a preconditioner that can be applied in a parallel-in-time fashion.

Since $\mathcal{P}_C$ represents a low-rank perturbation of $\mathcal{A}$, we can identify the majority of the eigenvalues of the preconditioned system.
\begin{theorem}
    The preconditioner $\mathcal{P}_C$ fulfills
\begin{align*}
\# \left\{ \mu \in \lambda\left( \mathcal{P}_C^{-1} \mathcal{A}\right) \mid \mu = 1 \right\} \ge 2 (n_t - 1) (n_v + n_p) - 2 n_v.
\end{align*}
\label{theorem:EigenvaluesPC}
\end{theorem}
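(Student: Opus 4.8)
\emph{Proof proposal.} The plan is to use that $\mathcal{P}_C$ arises from $\mathcal{A}$ solely by replacing $E$ with $C$, and that $C - E$ has rank one. First I would subtract the two matrices. Since the second and third Kronecker summands of $\mathcal{A}$ in \eqref{eq:FinalA} coincide with those in the definition of $\mathcal{P}_C$, only the first summand contributes, giving
\begin{align*}
    \mathcal{A} - \mathcal{P}_C = \begin{pmatrix}
        0 & (E - C)^T \\
        E - C & 0
    \end{pmatrix} \otimes
    \begin{pmatrix}
        M & 0 \\
        0 & 0
    \end{pmatrix}.
\end{align*}
By the definitions of $E$ and $C$, the matrix $C - E$ has exactly one nonzero entry, the $-1$ in its top-right corner, so $E - C = e_1 e_{n_t - 1}^T$ for the standard basis vectors $e_1, e_{n_t-1} \in \mathbb{R}^{n_t-1}$. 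Consequently the $2(n_t-1) \times 2(n_t-1)$ time factor above has range spanned by $\begin{pmatrix} e_{n_t-1} \\ 0 \end{pmatrix}$ and $\begin{pmatrix} 0 \\ e_1 \end{pmatrix}$, hence rank $2$. Since $\begin{pmatrix} M & 0 \\ 0 & 0\end{pmatrix}$ has rank $n_v$ ($M$ being symmetric positive definite, hence invertible) and $\operatorname{rank}(X \otimes Y) = \operatorname{rank}(X)\operatorname{rank}(Y)$, the perturbation $\mathcal{A} - \mathcal{P}_C$ has rank exactly $2n_v$.

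Next I would write $\mathcal{P}_C^{-1}\mathcal{A} = I + \mathcal{P}_C^{-1}(\mathcal{A} - \mathcal{P}_C)$, which presents $\mathcal{P}_C^{-1}\mathcal{A}$ as a rank-$2n_v$ perturbation of the identity, multiplication by the invertible $\mathcal{P}_C^{-1}$ leaving the rank of $\mathcal{A} - \mathcal{P}_C$ unchanged. Writing $N = 2(n_t - 1)(n_v + n_p)$ for the size of the all-at-once system (velocity and adjoint velocity of dimension $n_v$, pressure and adjoint pressure of dimension $n_p$, at each of the $n_t - 1$ interior time points), the kernel of $\mathcal{P}_C^{-1}(\mathcal{A} - \mathcal{P}_C)$ has dimension $N - 2n_v$. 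Every vector in this kernel is an eigenvector of $\mathcal{P}_C^{-1}\mathcal{A}$ for the eigenvalue $1$, so that eigenvalue has geometric multiplicity at least $N - 2n_v$, and its algebraic multiplicity is at least as large; counting eigenvalues with multiplicity yields
\begin{align*}
    \# \left\{ \mu \in \lambda\left( \mathcal{P}_C^{-1}\mathcal{A} \right) \mid \mu = 1 \right\} \ge N - 2n_v = 2(n_t - 1)(n_v + n_p) - 2n_v.
\end{align*}

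Two minor points need to be addressed for rigour, neither of which I expect to be a real obstacle. First, \eqref{eq:FinalA} is obtained from the assembled matrix $\bar{\mathcal{A}}$ by a permutation; since the identical permutation is applied to $\mathcal{P}_C$, the spectrum of $\mathcal{P}_C^{-1}\mathcal{A}$ is unaffected and one may work throughout with the Kronecker form. Second, the argument presupposes that $\mathcal{P}_C$ is invertible, which amounts to invertibility of the blocks $G_j$ in \eqref{eq:SubBlockG} after pinning the pressure as described; this is in any case required for $\mathcal{P}_C$ to serve as a preconditioner. The only care genuinely needed is the exact rank count for the Kronecker product and the fact that algebraic multiplicity may strictly exceed geometric multiplicity (and that the remaining $2n_v$ eigenvalues could in principle also equal $1$), which is precisely why the statement is an inequality.
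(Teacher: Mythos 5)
Your proof is correct and follows precisely the route the paper sketches (the paper omits an explicit proof of this theorem but states directly afterwards that $\mathcal{A} = \mathcal{P}_C + \mathcal{P}_R$ with $\operatorname{rank}\mathcal{P}_R = 2n_v$, which is exactly the rank count you derive from the Kronecker structure and $E - C = e_1 e_{n_t-1}^T$). Your additional care about geometric versus algebraic multiplicity and the invertibility of $\mathcal{P}_C$ is appropriate and completes the argument cleanly.
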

Thus, the preconditioner has the desirable property of eigenvalue clustering. 
Nevertheless, the application of $\mathcal{P}_C$ still requires solving for a large number of stationary problems $G_j$ at each Krylov iteration, which can be expensive for even moderately sized problems. In that case, the overall preconditioner might become inefficient for its practical use. 
In the following, we will look at possible ways of solving the subblocks $G_j$ arising from the diagonalization. First, we will discuss possible preconditioners for the subsystem, then we will look at approaches for embedding these within the preconditioner $\mathcal{P}_C$.

\subsection{Preconditioning the subsystem}
\label{sec:SubSystemPreconditioners}
In \cite{Krendl2015EfficientEquations}, a preconditioner was developed for $\eqref{eq:SubBlockG}$. First, the block is decomposed into the form
\begin{align}
    G_j &= T_j^{(l)} Z_j T_j^{(r)},
    \label{eq:GJTransformation}
\end{align}
with the matrices
\begin{align*}
    T_j^{(l)} &= \begin{pmatrix}
        \sqrt{\tau} I & 0 & 0 & 0 \\
        \frac{d_{j, c}}{\sqrt{\tau}} i I & \frac{\sqrt{\tau}}{c_{j, 2}} I & 0 & 0 \\
        0 & 0 & \sqrt{\tau} c_{j, 2} I & 0 \\
        0 & 0 & \frac{d_{j, c} c_{j, 2}}{\sqrt{\tau}} i I & \sqrt{\tau} I
    \end{pmatrix}, \quad 
    T_j^{(r)} = \begin{pmatrix}
        \sqrt{\tau} I & -\frac{d_{j,c}}{\sqrt{\tau}} i I & 0 & 0 \\
        0 & \frac{\sqrt{\tau}}{c_{j, 2}} I & 0 & 0 \\
        0 & 0 & \sqrt{\tau} c_{j, 2} I & -\frac{d_{j, c} c_{j, 2}}{\sqrt{\tau}}i I \\
        0 & 0 & 0 & \sqrt{\tau} I
    \end{pmatrix}, \\
    Z_j &= \begin{pmatrix}
        M & c_{j, 1} M + c_{j, 2} L & 0 & B^T \\
        c_{j, 1} M + c_{j, 2} L & -M & B^T & 0 \\
        0 & B & 0 & 0 \\
        B & 0 & 0 & 0
    \end{pmatrix},
\end{align*}
and the constants
\begin{align*}
d_{j, r} = \operatorname{Re}(d_j), \quad & d_{j, c} = \operatorname{Im}(d_j), \\
c_{j, 1} = \frac{d_{j, r}}{\sqrt{\frac{\tau^2}{\beta} + d_{j, c}}}, \quad & c_{j, 2} = \frac{1}{\sqrt{\frac{1}{\beta} + \frac{d_{j, c}^2}{\tau^2}}}.
\end{align*}
Since $T_j^{(l)}$ and $T_j^{(r)}$ involve only block row manipulations and scalings, their inverses can be applied efficiently. This allows us to focus on preconditioning the matrix $Z_j$. 

The first attempt involves using
\begin{align}
    \widehat{P}_j = \begin{pmatrix}
        W_j & & & \\
        & W_j & & \\
        & & B W_j^{-1} B^T & \\
        & & & B W_j ^{-1} B^T
    \end{pmatrix}
    \label{eq:PJPreconditionerExact}
\end{align}
as a preconditioner for $Z_j$, where $W_j = M + c_{j, 1} M + c_{j, 2} L$.
In \cite{Krendl2015EfficientEquations}, it was shown that $\widehat{P}_j$ is symmetric positive definite and that the absolute values of the eigenvalues of the preconditioned matrix are contained in the following interval:
\begin{align}
    \left| \lambda(\widehat{P}_j^{-1} Z_j) \right| \subseteq \left[\frac{1}{\sqrt{12}}, \frac{1 + \sqrt{5}}{2}\right].
    \label{eq:SchurComplementHatBounds}
\end{align}
Even though $\widehat{P}_j$ leads to robust spectral properties for the preconditioned system and has a block diagonal form, the matrices $B W_j^{-1} B^T$ are still difficult to apply the inverses of, since in general they are dense and expensive to assemble. Thus, the authors of \cite{Krendl2015EfficientEquations} proposed an approximation based on the commutator argument
\begin{align*}
    B W_j^{-1} B^T \approx M_p \left(M_p + c_{j, 1} M_p + c_{j, 2} L_p\right)^{-1} K_p = \left(K_p^{-1} + c_{j, 1} K_p^{-1} + \nu c_{j, 2} M_p^{-1} \right)^{-1} = \widehat{S}_j.
\end{align*}
Note that $L_p = \nu K_p$, because we consider the Stokes problem. Plugging this into \eqref{eq:PJPreconditionerExact} results in a preconditioner denoted by $\widetilde{P}_j$. 

\subsection{Linear preconditioner}
The most immediate idea for a preconditioner of the overall system \eqref{eq:FinalA} is to start with $\mathcal{P}_C$ and replace all $Z_j$ by approximations. The structure of the resulting preconditioner, given some approximations $\bar{Z}_j$, is outlined in Algorithm~\ref{alg:LinearPrecond}. The preceding analysis suggests that $\widehat{P}_j$ and $\widetilde{P}_j$ form potentially good candidates. We will identify the preconditioners obtained from using these approximations at every time point by $\widehat{\mathcal{P}}_C$ and $\widetilde{\mathcal{P}}_C$. 

\begin{algorithm}[t]
\begin{algorithmic}
    \Function{$\bar{\mathcal{P}}_C^{-1}$}{$r$}

    \State $\hat{r} \gets \mathcal{F}r$
    \State Permute $\hat{r}$ to arrive at block-diagonal system
    \For{$j \in \{ 1, \dots, n_t - 1 \}$}
        \State $\hat{s}_j \gets \left(T_j^{(l)}\right)^{-1} \hat{r}_j$
        \State $\hat{x}_j \gets \bar{Z}_j^{-1} \hat{s}_j$
        \State $\hat{y}_j \gets \left(T_j^{(r)}\right)^{-1} \hat{x}_j$
    \EndFor
    \State Reverse permutation of $\hat{y}$
    \State $y \gets \mathcal{F}^{-1} \hat{y}$
    \State \Return $y$
    
    \EndFunction
\end{algorithmic}
\caption{Linear preconditioner $\bar{\mathcal{P}}_C$ for all-at-once system given an approximation $\bar{Z}_j$ of $Z_j$}
\label{alg:LinearPrecond}
\end{algorithm}

Similar to $\mathcal{P}_C$, we can derive estimates for the eigenvalues of the preconditioned systems $\widehat{\mathcal{P}}_C^{-1} \mathcal{A}$ and $\widetilde{\mathcal{P}}_C^{-1} \mathcal{A}$. The estimates are centered around the fact that $\mathcal{P}_C$ is a low-rank perturbation of the original system $\mathcal{A}$. More precisely, we have
\begin{align*}
    \mathcal{A} = \mathcal{P}_C + \mathcal{P}_R,
\end{align*}
where the matrix $\mathcal{P}_R$ has rank $2 n_v$. Because $\widehat{\mathcal{P}}_C$ is symmetric positive definite, we have that the eigenvalue analysis of the preconditioned system can be carried out on the following similar matrices:
\begin{align}
    \widehat{\mathcal{P}}_C^{-1} \mathcal{A} ~\sim~ \underbrace{\widehat{\mathcal{P}}_C^{-1/2} \mathcal{A} \widehat{\mathcal{P}}_C^{-1/2}}_{= \mathcal{H}} = \underbrace{\widehat{\mathcal{P}}_C^{-1/2} \mathcal{P}_C \widehat{\mathcal{P}}_C^{-1/2}}_{= \mathcal{K}} + \underbrace{\widehat{\mathcal{P}}_C^{-1/2} \mathcal{P}_R \widehat{\mathcal{P}}_C^{-1/2}}_{= \mathcal{L}}.
    \label{eq:PCHatSimilarMatrices}
\end{align}

Now, let the sequences $\eta_i$, $\kappa_i$, and $\lambda_i$ be the eigenvalues of the matrices $\mathcal{H}$, $\mathcal{K}$, and $\mathcal{L}$ in non-increasing order. Each eigenvalue occurs in the sequence according to its multiplicity. The matrix $\mathcal{K}$ has the same eigenvalues as $\widehat{\mathcal{P}}_C^{-1} \mathcal{P}_C$, whose eigenvalues are determined by the preconditioned subblocks, i.e.,
\begin{align*}
    \lambda\left(\widehat{\mathcal{P}}_C^{-1} \mathcal{P}_C\right) = \bigcup_j \lambda\left( \widehat{P}_j^{-1} Z_j \right).
\end{align*}
This allows us to establish bounds $\hat{a}, \hat{b} > 0$ such that
\begin{align*}
    \left|\lambda\left(\mathcal{K}\right)\right| = \left|\lambda\left(\widehat{\mathcal{P}}_C^{-1} \mathcal{P}_C\right)\right| \in [\hat{a}, \hat{b}].
\end{align*}
Due to the specific saddle-point structure of $\mathcal{P}_C$, Sylvester's law of inertia implies that the eigenvalues can be divided into two equally sized chunks of negative and positive eigenvalues, i.e., it holds that
\begin{align*}
    \kappa_i \in [\hat{a}, \hat{b}] \text{ for } i \in I^+_\kappa, \quad
    \kappa_i \in [-\hat{b}, -\hat{a}] \text{ for } i \in I^-_\kappa,
\end{align*}
with the index sets
\begin{align*}
    I^+_\kappa = \left\{ 1, \dots, \frac{N}{2} \right\}, \quad
    I^-_\kappa = \left\{\frac{N}{2} + 1, \dots, N\right\}.
\end{align*}
where $N = 2 (n_t - 1) (n_v + n_p)$. Due to the low-rank structure of $\mathcal{P}_R$, almost all eigenvalues $\lambda_i$ are zero. Specifically, there are $2 n_v$ non-zero eigenvalues. Even though we do not have bounds on the magnitude of these eigenvalues, we can use a similar procedure as before to show with Sylvester's law of inertia that
\begin{align*}
    \lambda_i \ge 0 \text{ for } i \in I^+_\lambda, \quad
    \lambda_i = 0 \text{ for } i \in I^0_\lambda, \quad
    \lambda_i \le 0 \text{ for } i \in I^-_\lambda,
\end{align*}
where the index sets are given by
\begin{align*}
    I^+_\lambda = \{ 1, \dots, n_v \}, \quad
    I^0_\lambda = \{n_v + 1, \dots, N - n_v\}, \quad
    I^-_\lambda = \{N - n_v + 1, \dots N\}.
\end{align*}

With that, we may derive estimates for $\eta_i$, which are broadly based on the following result from \cite{Weyl1912DasHohlraumstrahlung}:
\begin{lemma}
    Let $\bar{A}$ and $\bar{B}$ be arbitrary but fixed real symmetric matrices of size $n \times n$ and set $\bar{C} = \bar{A} + \bar{B}$. Denote the eigenvalues of $\bar{A}$, $\bar{B}$, and $\bar{C}$ by the sequences $\alpha_i$, $\beta_i$, and $\gamma_i$ in non-increasing order. Each eigenvalue occurs in the sequence according to its multiplicity. Then, it holds that
    \begin{align*}
        \gamma_{i + j - 1} \le \alpha_i + \beta_j
    \end{align*}
    for all $i + j - 1 \le n$.
    \label{theorem:SumEigenvalueBoundWeyl}
\end{lemma}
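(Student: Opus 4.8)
The plan is to derive the inequality from the Courant--Fischer min--max characterization of eigenvalues of symmetric matrices, combined with a dimension-counting argument. First I would recall that for a real symmetric $n \times n$ matrix $\bar{A}$ with eigenvalues $\alpha_1 \ge \cdots \ge \alpha_n$ and an associated orthonormal eigenbasis $\{\phi_1, \dots, \phi_n\}$, the Rayleigh quotient $R_{\bar{A}}(x) = (x^T \bar{A} x)/(x^T x)$ satisfies $R_{\bar{A}}(x) \le \alpha_i$ for every nonzero $x$ in $\mathcal{U}_i := \operatorname{span}\{\phi_i, \dots, \phi_n\}$, a subspace of dimension $n - i + 1$, and symmetrically $R_{\bar{A}}(x) \ge \alpha_k$ for every nonzero $x$ in $\operatorname{span}\{\phi_1, \dots, \phi_k\}$, of dimension $k$. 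The analogous statements hold for $\bar{B}$ and $\bar{C}$ with their respective eigenbases.

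Next I would introduce three subspaces tailored to the indices $i$ and $j$: the space $\mathcal{U} := \mathcal{U}_i$ built from the eigenvectors of $\bar{A}$ for $\alpha_i, \dots, \alpha_n$ (dimension $n - i + 1$), the space $\mathcal{V}$ built from the eigenvectors of $\bar{B}$ for $\beta_j, \dots, \beta_n$ (dimension $n - j + 1$), and the space $\mathcal{W}$ spanned by the eigenvectors of $\bar{C}$ for the $i + j - 1$ largest eigenvalues $\gamma_1, \dots, \gamma_{i+j-1}$ (dimension $i + j - 1$). By the previous paragraph, $R_{\bar{A}}(x) \le \alpha_i$ on $\mathcal{U} \setminus \{0\}$, $R_{\bar{B}}(x) \le \beta_j$ on $\mathcal{V} \setminus \{0\}$, and $R_{\bar{C}}(x) \ge \gamma_{i+j-1}$ on $\mathcal{W} \setminus \{0\}$.

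The crucial step is to show $\mathcal{U} \cap \mathcal{V} \cap \mathcal{W} \ne \{0\}$. This follows from the standard bound $\dim(X \cap Y) \ge \dim X + \dim Y - n$ applied twice: first $\dim(\mathcal{U} \cap \mathcal{V}) \ge (n - i + 1) + (n - j + 1) - n = n - i - j + 2$, and then $\dim(\mathcal{U} \cap \mathcal{V} \cap \mathcal{W}) \ge (n - i - j + 2) + (i + j - 1) - n = 1$, where the hypothesis $i + j - 1 \le n$ is exactly what guarantees $\mathcal{W}$ is well-defined and keeps all the dimensions non-negative. Choosing any nonzero $x$ in this triple intersection and using $\bar{C} = \bar{A} + \bar{B}$ yields
\begin{align*}
    \gamma_{i+j-1} \le R_{\bar{C}}(x) = R_{\bar{A}}(x) + R_{\bar{B}}(x) \le \alpha_i + \beta_j,
\end{align*}
which is the asserted inequality.

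The argument is essentially routine once the variational characterization of eigenvalues is available; the one point I would state explicitly, and which I expect to be the only genuine obstacle, is the dimension count establishing that the triple intersection is nontrivial --- this is precisely where the constraint $i + j - 1 \le n$ enters, without which the statement would be vacuous.
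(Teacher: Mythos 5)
Your proof is correct and is the standard variational argument for Weyl's inequality. Note, however, that the paper does not supply a proof of this lemma at all --- it simply cites Weyl's 1912 paper \cite{Weyl1912DasHohlraumstrahlung} and treats the statement as a known classical result --- so there is no in-paper argument to compare against. Your write-up via the Courant--Fischer characterization and the two-step dimension count on $\mathcal{U} \cap \mathcal{V}$ and then $\mathcal{U} \cap \mathcal{V} \cap \mathcal{W}$ is precisely the textbook proof, with the hypothesis $i + j - 1 \le n$ correctly identified as what keeps $\gamma_{i+j-1}$ well-defined and the intersection dimensions nonnegative; the only small imprecision is that the statement would be ill-posed rather than ``vacuous'' without that hypothesis, which does not affect the argument.
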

This allows us to carry over the initial estimate for $\mathcal{K}$ to a subset of the eigenvalues of $\mathcal{H}$.
\begin{theorem}
The preconditioner $\widehat{\mathcal{P}}_C$ fulfills
\begin{align*}
\# \left\{ \mu \in \lambda\left( \widehat{\mathcal{P}}_C^{-1} \mathcal{A}\right) \mid |\mu| \in [\hat{a}, \hat{b}] \right\} \ge 2 (n_t - 1) (n_v + n_p) - 4 n_v.
\end{align*}
\label{theorem:EigenvalueExactSchur}
\end{theorem}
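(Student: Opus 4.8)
The plan is to transfer the spectral information already established for $\mathcal{K}$ and $\mathcal{L}$ to $\mathcal{H} = \mathcal{K} + \mathcal{L}$ by means of Weyl's inequality, taking advantage of the fact that $\mathcal{L}$ has a large kernel. Since $\widehat{\mathcal{P}}_C^{-1}\mathcal{A} \sim \mathcal{H}$, it suffices to count how many eigenvalues $\eta_i$ of $\mathcal{H}$ satisfy $|\eta_i| \in [\hat{a}, \hat{b}]$. Recall that $\eta_i$, $\kappa_i$, $\lambda_i$ are listed in non-increasing order, that $\kappa_i \in [\hat{a}, \hat{b}]$ for $i \in I^+_\kappa$ and $\kappa_i \in [-\hat{b}, -\hat{a}]$ for $i \in I^-_\kappa$ (so in particular $-\hat{b} \le \kappa_i \le \hat{b}$ for every $i$), and that $\lambda_j = 0$ for every $j \in I^0_\lambda = \{n_v + 1, \dots, N - n_v\}$.

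First I would record the companion form of Lemma~\ref{theorem:SumEigenvalueBoundWeyl}, namely $\gamma_{i + j - n} \ge \alpha_i + \beta_j$ whenever $i + j - n \ge 1$, which follows by applying the lemma to $-\bar{A}$, $-\bar{B}$, $-\bar{C}$ and reindexing. Applying the lemma itself with $j = n_v + 1$ — so that $\beta_j$ is replaced by $\lambda_{n_v + 1} = 0$ — gives $\eta_{i + n_v} \le \kappa_i$ for all $i$ with $i + n_v \le N$; since $\kappa_i \le \hat{b}$ for every $i$ and $\kappa_i \le -\hat{a}$ for $i \in I^-_\kappa$, this yields $\eta_k \le \hat{b}$ for all $k \ge n_v + 1$ and $\eta_k \le -\hat{a}$ for all $k \ge N/2 + n_v + 1$. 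Applying the companion form with $j = N - n_v$ — again killing the $\lambda$-term — gives $\eta_{i - n_v} \ge \kappa_i$ for all $i$ with $i - n_v \ge 1$; since $\kappa_i \ge -\hat{b}$ for every $i$ and $\kappa_i \ge \hat{a}$ for $i \in I^+_\kappa$, this yields $\eta_k \ge -\hat{b}$ for all $k \le N - n_v$ and $\eta_k \ge \hat{a}$ for all $k \le N/2 - n_v$.

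Combining these four one-sided estimates, I would conclude that $\hat{a} \le \eta_k \le \hat{b}$ for every $k \in \{n_v + 1, \dots, N/2 - n_v\}$ and $-\hat{b} \le \eta_k \le -\hat{a}$ for every $k \in \{N/2 + n_v + 1, \dots, N - n_v\}$. These two index sets are disjoint and each has cardinality $N/2 - 2 n_v$, so at least $N - 4 n_v = 2(n_t - 1)(n_v + n_p) - 4 n_v$ eigenvalues of $\mathcal{H}$, and hence of $\widehat{\mathcal{P}}_C^{-1}\mathcal{A}$, have modulus in $[\hat{a}, \hat{b}]$; the bound is of course vacuous when $2 n_v \ge N/2$.

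The routine but error-prone part of this argument is the bookkeeping of the index shifts and of the admissibility conditions $i + j - 1 \le N$ and $i + j - N \ge 1$ in the two Weyl inequalities. The only genuinely conceptual point is to feed the inequalities the zero eigenvalues of $\mathcal{L}$ — which are abundant precisely because $\operatorname{rank}\mathcal{P}_R = 2 n_v$ — rather than its extreme eigenvalues, on which no bound is available; this is also what forces the loss of up to $n_v$ eigenvalues near each of the four cluster endpoints $\hat{b}$, $\hat{a}$, $-\hat{a}$, $-\hat{b}$, and hence the constant $4 n_v$ here in place of the $2 n_v$ appearing in Theorem~\ref{theorem:EigenvaluesPC}.
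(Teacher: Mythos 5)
Your argument is correct and is essentially the proof given in the paper: both feed the zero eigenvalues of $\mathcal{L}$ into Weyl's inequality at index $n_v+1$, and both obtain the matching lower bounds by the reflected form of Weyl's inequality (which the paper phrases as applying the same procedure to $\mathcal{H}^- = -\mathcal{K}-\mathcal{L}$ with $\eta^-_k = -\eta_{N-k+1}$, while you state the companion inequality $\gamma_{i+j-n}\ge\alpha_i+\beta_j$ directly). Your bookkeeping — using that $\kappa_i\le\hat{b}$ and $\kappa_i\ge-\hat{b}$ hold for all $i$, not only on $I^\pm_\kappa$ — is a minor streamlining and leads to the same index sets $\{n_v+1,\dots,N/2-n_v\}$ and $\{N/2+n_v+1,\dots,N-n_v\}$ and hence the same count $N-4n_v$.
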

\begin{proof}
Let the matrices $\mathcal{H}$, $\mathcal{K}$, and $\mathcal{L}$ and the corresponding eigenvalue sequences be defined as in \eqref{eq:PCHatSimilarMatrices}. First, we establish an estimate from above by $\hat{b}$ for some of the eigenvalues $\eta_k$ of $\widehat{\mathcal{P}}_C^{-1} \mathcal{A}$. The estimate of Lemma \ref{theorem:SumEigenvalueBoundWeyl}
\begin{align*}
    \eta_{i + j - 1} \le \kappa_i + \lambda_j \text{ for } i + j - 1 \le N
\end{align*}
simplifies to
\begin{align*}
    \eta_{i + n_v} \le \kappa_i \text{ for } i \le N - n_v
\end{align*}
by setting $j = n_v + 1$, since $n_v + 1 \in I_\lambda^0$. The estimate $\kappa_i \le \hat{b}$ holds for $i \in I_\kappa^+$. Thus, we want to restrict ourselves to the case $i \le \frac{N}{2}$. This allows us to infer that
\begin{align*}
    \eta_{i + n_v} \le \kappa_i \le \hat{b} \text{ for } i \in \left\{j \in I_\kappa^+ \mid j + n_v \le \frac{N}{2}\right\}.
\end{align*}
By substituting the index of $\eta$, this implies
\begin{align*}
    \eta_k \le \hat{b} \text{ for } k \in I_\eta^{\hat{b}} = \left\{n_v + 1, \dots, \frac{N}{2}\right\}.
\end{align*}

Next, we determine a set of eigenvalues for which the upper bound $-\hat{a}$ holds. The same theorem can be used for that, but instead of $i \in I_\kappa^+$, we are interested in indices $i \in I_\kappa^-$, since for this set we have the estimate $\kappa_i \le -\hat{a}$. Hence,
\begin{align*}
    \eta_{i + n_v} \le \kappa_i \le -\hat{a} \text{ for } i \in \left\{j \in I_\kappa^- \mid j + n_v \le N\right\},
\end{align*}
which leads to the estimate
\begin{align*}
    \eta_{k} \le -\hat{a} \text{ for } k \in I_\eta^{-\hat{a}} = \left\{ \frac{N}{2} + n_v + 1, \dots, N \right\}.
\end{align*}

The matrix $\mathcal{H}^- := -\mathcal{K} - \mathcal{L}$ has eigenvalues $\eta^-_k = -\eta_{N - k + 1}$. If we follow the same procedure as above for $\eta_k^-$, we receive the estimates
\begin{align*}
    \eta^-_k &\le \hat{b} \text{ for } k \in I_\eta^{\hat{b}}, \\
    \eta^-_k &\le -\hat{a} \text{ for } k \in I_\eta^{-\hat{a}},
\end{align*}
which in turn leads to
\begin{align*}
    \eta_k &\ge -\hat{b} \text{ for } k \in I_\eta^{-\hat{b}} = \left\{\frac{N}{2} + 1, \dots, N - n_v \right\}, \\
    \eta_k &\ge \hat{a} \text{ for } k \in I_\eta^{\hat{a}} = \left\{ 1, \dots, \frac{N}{2} - n_v \right\}.
\end{align*}

Hence,
\begin{alignat*}{3}
    \eta_k &\in [\hat{a}, \hat{b}] &&\text{ for } k \in I_\eta^+ = I_\eta^{\hat{a}} \cap I_\eta^{\hat{b}} = \left\{ n_v + 1, \dots, \frac{N}{2} - n_v \right\}, \\
    \eta_k &\in [-\hat{b}, -\hat{a}] &&\text{ for } k \in I_\eta^- = I_\eta^{-\hat{a}} \cap I_\eta^{-\hat{b}} = \left\{ \frac{N}{2} + n_v + 1, \dots, N - n_v \right\}.
\end{alignat*}
Finally, we get the result by determining the size of the set $| I_\eta^+ \cup I_\eta^-| = N - 4 n_v$.
\end{proof}

The same idea can be applied to the matrix $\widetilde{\mathcal{P}}_C^{-1} \mathcal{A}$ by first applying the following lemma: 
\begin{lemma}
    Assume that $\bar{A}$ and $\bar{B}$ are symmetric positive definite matrices and that $\bar{C}$ is a symmetric, invertible matrix with the following property:
    \begin{align*}
        \left| \lambda(\bar{A}^{-1} \bar{B}) \right| &\in [a, b], \\
        \left| \lambda(\bar{B}^{-1} \bar{C}) \right| &\in [c, d].
    \end{align*}
    Then, the product $\bar{A}^{-1} \bar{C}$ fulfills
    \begin{align*}
        \left| \lambda(\bar{A}^{-1} \bar{C}) \right| &\in [ac, bd].
    \end{align*}
    \label{theorem:Eigevals}
\end{lemma}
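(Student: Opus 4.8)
\emph{Proof proposal.}
The plan is to transfer the statement onto symmetric matrices, where the eigenvalues are real and sandwiched by the spectral ($2$-)norm, and then to exploit submultiplicativity of $\|\cdot\|_2$. Introduce $S := \bar{A}^{-1/2}\bar{C}\bar{A}^{-1/2}$ and $T := \bar{B}^{-1/2}\bar{C}\bar{B}^{-1/2}$; both are symmetric, $S$ is similar to $\bar{A}^{-1}\bar{C}$, $T$ is similar to $\bar{B}^{-1}\bar{C}$, and both are invertible since $\bar{C}$ is. Thus it suffices to prove $\|S\|_2 \le bd$ and $\|S^{-1}\|_2 \le (ac)^{-1}$, because for a symmetric invertible matrix $\max_i|\lambda_i(S)| = \|S\|_2$ and $\min_i|\lambda_i(S)| = \|S^{-1}\|_2^{-1}$. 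The hypothesis $|\lambda(\bar{B}^{-1}\bar{C})| \in [c,d]$ reads off directly as $\|T\|_2 \le d$ and $\|T^{-1}\|_2 \le c^{-1}$. For the hypothesis on $\bar{A}^{-1}\bar{B}$, note that $\bar{A}^{-1}\bar{B}$ is similar to the SPD matrix $\bar{A}^{-1/2}\bar{B}\bar{A}^{-1/2}$, so $|\lambda(\bar{A}^{-1}\bar{B})| \in [a,b]$ in fact means $\lambda(\bar{A}^{-1}\bar{B}) \subseteq [a,b]$, whence $\lambda(\bar{B}^{-1}\bar{A}) \subseteq [b^{-1}, a^{-1}]$.

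The link between $S$ and $T$ is the congruence factor $R := \bar{B}^{1/2}\bar{A}^{-1/2}$: a short computation gives $S = R^{T} T R$, hence $S^{-1} = R^{-1} T^{-1} R^{-T}$. Since $R^{T}R = \bar{A}^{-1/2}\bar{B}\bar{A}^{-1/2}$ and $R^{-1}R^{-T} = \bar{A}^{1/2}\bar{B}^{-1}\bar{A}^{1/2}$, we obtain $\|R\|_2^2 = \rho(\bar{A}^{-1}\bar{B}) \le b$ and $\|R^{-1}\|_2^2 = \rho(\bar{B}^{-1}\bar{A}) \le a^{-1}$ from the previous paragraph. Submultiplicativity of $\|\cdot\|_2$ then yields
\begin{align*}
\|S\|_2 \le \|R\|_2^2\,\|T\|_2 \le bd, \qquad \|S^{-1}\|_2 \le \|R^{-1}\|_2^2\,\|T^{-1}\|_2 \le \frac{1}{a}\cdot\frac{1}{c},
\end{align*}
and since $\lambda(\bar{A}^{-1}\bar{C}) = \lambda(S)$, this gives $|\lambda(\bar{A}^{-1}\bar{C})| \in [ac, bd]$, as claimed.

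I do not expect a genuine obstacle here: the argument is essentially bookkeeping with square roots and congruences. The one step that uses the full force of the hypotheses is the upgrade of $|\lambda(\bar{A}^{-1}\bar{B})| \in [a,b]$ to $\lambda(\bar{A}^{-1}\bar{B}) \subseteq [a,b]$ (so that $\rho(\bar{B}^{-1}\bar{A}) \le a^{-1}$, not merely a bound on the magnitudes), which relies on $\bar{A}$ and $\bar{B}$ being \emph{both} positive definite. It is worth stressing why one cannot argue instead with the Rayleigh quotient identity $x^{T}\bar{C}x / x^{T}\bar{A}x = (x^{T}\bar{C}x/x^{T}\bar{B}x)(x^{T}\bar{B}x/x^{T}\bar{A}x)$: when $\bar{C}$ (equivalently $T$) is indefinite, the factor $x^{T}\bar{C}x/x^{T}\bar{B}x$ can vanish, so no lower bound on its absolute value is available, whereas the norm/submultiplicativity route sidesteps this difficulty entirely.
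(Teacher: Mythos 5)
Your proof is correct and follows essentially the same route as the paper's: both pass to the symmetric representative $\bar{A}^{-1/2}\bar{C}\bar{A}^{-1/2}$, insert $\bar{B}^{1/2}\bar{B}^{-1/2}$ to factor it as $(\bar{A}^{-1/2}\bar{B}^{1/2})(\bar{B}^{-1/2}\bar{C}\bar{B}^{-1/2})(\bar{B}^{1/2}\bar{A}^{-1/2})$, bound the outer factors via $\rho(\bar{A}^{-1}\bar{B})$, apply submultiplicativity of $\|\cdot\|_2$, and obtain the lower bound by repeating the argument on the inverse. Your write-up is slightly more explicit about naming the congruence factor $R$ and about why the hypothesis on $\bar A^{-1}\bar B$ yields a positive (not merely absolute-value) spectral interval, but the underlying argument is identical.
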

\begin{proof}
First, we consider the upper bound. It holds that
\begin{align*}
    \max \left| \lambda(\bar{A}^{-1} \bar{C}) \right| &= \max \left| \lambda(\bar{A}^{-1/2} \bar{C} \bar{A}^{-1/2}) \right| \\
    &= \max \left| \lambda(\bar{A}^{-1/2} \bar{B}^{1/2} \bar{B}^{-1/2} \bar{C} \bar{B}^{-1/2} \bar{B}^{1/2} \bar{A}^{-1/2}) \right| \\
    &= \left\| \bar{A}^{-1/2} \bar{B}^{1/2} \bar{B}^{-1/2} \bar{C} \bar{B}^{-1/2} \bar{B}^{1/2} \bar{A}^{-1/2} \right\|_2 \\
    &\le \left\| \bar{A}^{-1/2} \bar{B}^{1/2} \right\|_2 \left\| \bar{B}^{-1/2} \bar{C} \bar{B}^{-1/2} \right\|_2 \left\| \bar{B}^{1/2} \bar{A}^{-1/2} \right\|_2 \\
    &= \max \left| \lambda(\bar{A}^{-1} \bar{B}) \right| \max \left| \lambda(\bar{B}^{-1} \bar{C}) \right| \\
    &\le bd,
\end{align*}
where we used that
\begin{align*}
\max \left| \lambda(M) \right| = \| M\|_2
\end{align*}
for any symmetric matrix $M$. The lower bound follows from proceeding as above but with $(\bar{A}^{-1} \bar{C})^{-1}$ instead of $\bar{A}^{-1}\bar{C}$. Since
\begin{align*}
    \lambda((\bar{A}^{-1} \bar{C})^{-1}) = \{ \lambda^{-1} \mid \lambda \in \lambda(\bar{A}^{-1} \bar{C}) \},
\end{align*}
we get the upper bound
\begin{align*}
    \min \left| \lambda(\bar{A}^{-1} \bar{C}) \right| \ge a c.
\end{align*}~
\end{proof}
Since the commutator argument gives robust bounds $\widehat{S}_j^{-1} B W_j^{-1} B^T \in [c, d]$, it holds that
\begin{align*}
    \left|\lambda\left(\widetilde{\mathcal{P}}_C^{-1}\mathcal{P}_C\right)\right| \in [\hat{a}c, \hat{b}d] =: [\tilde{a}, \tilde{b}],
\end{align*}
which makes the following theorem a direct consequence using the same reasoning as in Theorem~\ref{theorem:EigenvalueExactSchur}:
\begin{theorem}
The preconditioner $\widetilde{\mathcal{P}}_C$ fulfills
\begin{align*}
\# \left\{ \mu \in \lambda\left( \widetilde{\mathcal{P}}_C^{-1} \mathcal{A}\right) \mid |\mu| \in [\tilde{a}, \tilde{b}] \right\} \ge 2 (n_t - 1) (n_v + n_p) - 4 n_v.
\end{align*}
\label{theorem:EigenvaluesInexactSchur}
\end{theorem}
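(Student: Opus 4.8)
The statement is the exact analogue of Theorem~\ref{theorem:EigenvalueExactSchur} with the interval $[\hat a,\hat b]$ replaced by $[\tilde a,\tilde b]=[\hat a c,\hat b d]$, so the plan is to re-run that proof after one preliminary reduction. The reduction is precisely the content of the paragraph preceding the statement: Lemma~\ref{theorem:Eigevals}, applied with $\bar A=\widetilde{\mathcal P}_C$, $\bar B=\widehat{\mathcal P}_C$ and $\bar C=\mathcal P_C$, combines the bound $|\lambda(\widehat{\mathcal P}_C^{-1}\mathcal P_C)|\in[\hat a,\hat b]$ — which holds because $\lambda(\widehat{\mathcal P}_C^{-1}\mathcal P_C)=\bigcup_j\lambda(\widehat P_j^{-1}Z_j)$ and \eqref{eq:SchurComplementHatBounds} — with the blockwise commutator bound $|\lambda(\widetilde{\mathcal P}_C^{-1}\widehat{\mathcal P}_C)|\in[c,d]$ to give $|\lambda(\widetilde{\mathcal P}_C^{-1}\mathcal P_C)|\in[\tilde a,\tilde b]$. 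This is the only place where anything about $\widetilde{\mathcal P}_C$ not already present in the proof of Theorem~\ref{theorem:EigenvalueExactSchur} enters.

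With that bound in hand I would reproduce the argument of Theorem~\ref{theorem:EigenvalueExactSchur} line by line. Recall that $\widetilde{\mathcal P}_C$ is symmetric positive definite — each block $\widetilde P_j=\operatorname{diag}(W_j,W_j,\widehat S_j,\widehat S_j)$ is positive definite, $\widehat S_j$ being the inverse of a sum of positive-definite matrices — so $\widetilde{\mathcal P}_C^{-1}\mathcal A$ is similar to
\begin{align*}
\widetilde{\mathcal P}_C^{-1/2}\mathcal A\,\widetilde{\mathcal P}_C^{-1/2}=\underbrace{\widetilde{\mathcal P}_C^{-1/2}\mathcal P_C\,\widetilde{\mathcal P}_C^{-1/2}}_{=\,\widetilde{\mathcal K}}+\underbrace{\widetilde{\mathcal P}_C^{-1/2}\mathcal P_R\,\widetilde{\mathcal P}_C^{-1/2}}_{=\,\widetilde{\mathcal L}},
\end{align*}
playing the role of \eqref{eq:PCHatSimilarMatrices}. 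Since $\widetilde{\mathcal K}$ is congruent to the saddle-point matrix $\mathcal P_C$ and shares its eigenvalues with $\widetilde{\mathcal P}_C^{-1}\mathcal P_C$, Sylvester's law of inertia yields $\kappa_i\in[\tilde a,\tilde b]$ for $i\in I_\kappa^+=\{1,\dots,\tfrac N2\}$ and $\kappa_i\in[-\tilde b,-\tilde a]$ for $i\in I_\kappa^-$, with $N=2(n_t-1)(n_v+n_p)$; and $\widetilde{\mathcal L}$, congruent to the rank-$2n_v$ matrix $\mathcal P_R$, still has $n_v$ non-negative, $N-2n_v$ zero and $n_v$ non-positive eigenvalues, so the index sets $I_\lambda^+,I_\lambda^0,I_\lambda^-$ are unchanged. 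Applying Lemma~\ref{theorem:SumEigenvalueBoundWeyl} to $\widetilde{\mathcal K}+\widetilde{\mathcal L}$ and to $-\widetilde{\mathcal K}-\widetilde{\mathcal L}$ with the shift index $j=n_v+1\in I_\lambda^0$, and intersecting the resulting ranges exactly as in Theorem~\ref{theorem:EigenvalueExactSchur}, gives $|\eta_k|\in[\tilde a,\tilde b]$ for $k\in I_\eta^+\cup I_\eta^-$ with $|I_\eta^+\cup I_\eta^-|=N-4n_v$, which is the assertion.

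The only thing that genuinely needs checking, as opposed to transcription, is that $\widetilde{\mathcal P}_C$ is positive definite uniformly in the frequency index $j$ and that the commutator bound $|\lambda(\widehat S_j^{-1}BW_j^{-1}B^T)|\in[c,d]$ is uniform in $j$; both follow from the subblock analysis of \cite{Krendl2015EfficientEquations}. Everything downstream of the reduction in the first paragraph — the symmetrisation, the inertia count for $\widetilde{\mathcal K}$ and $\widetilde{\mathcal L}$, and the Weyl estimate — is word for word the proof of Theorem~\ref{theorem:EigenvalueExactSchur} with $(\hat a,\hat b)$ replaced by $(\tilde a,\tilde b)$, so in the write-up I would state the reduction via Lemma~\ref{theorem:Eigevals}, display the decomposition above, and then refer to the proof of Theorem~\ref{theorem:EigenvalueExactSchur} for the remainder.
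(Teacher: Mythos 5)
Your proposal is correct and follows exactly the route the paper itself indicates: the paper gives no separate proof of this theorem, instead deriving the bound $|\lambda(\widetilde{\mathcal{P}}_C^{-1}\mathcal{P}_C)|\in[\tilde a,\tilde b]$ via Lemma~\ref{theorem:Eigevals} in the preceding paragraph and then stating that the result is ``a direct consequence using the same reasoning as in Theorem~\ref{theorem:EigenvalueExactSchur}''. You have simply unpacked that one-sentence proof: the application of Lemma~\ref{theorem:Eigevals} with $\bar A=\widetilde{\mathcal P}_C$, $\bar B=\widehat{\mathcal P}_C$, $\bar C=\mathcal P_C$, the positive-definiteness of $\widetilde{\mathcal P}_C$, and the verbatim rerun of the symmetrisation, inertia, and Weyl steps are all as intended, and your closing remark about uniformity of the commutator bound in $j$ is a reasonable caveat the paper leaves implicit.
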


Theorems \ref{theorem:EigenvalueExactSchur} and \ref{theorem:EigenvaluesInexactSchur} show that the vast majority of the eigenvalues of the relevant preconditioned systems are tightly contained in positive or negative clusters bounded away from the origin, with at most $4 n_v$ outlier eigenvalues due to the low-rank update used to enable a parallelizable-in-time block circulant approximation.

Theorems \ref{theorem:EigenvaluesPC}, \ref{theorem:EigenvalueExactSchur}, and \ref{theorem:EigenvaluesInexactSchur} can be verified in practice. Figure~\ref{fig:Eigenvalues} shows the real parts of the eigenvalues of some preconditioned systems. We restrict ourselves to rather small problems with $n_t = 10$ for this analysis due to the memory and computational requirements incurred by the computation of the eigenvalues. Figure~\ref{fig:RealPlots:P1} shows that the majority of the eigenvalues of $\mathcal{P}_C^{-1} \mathcal{A}$ is $1$ and the ratio of the remaining eigenvalues is less than $1/(n_t - 1)$. In Figure~\ref{fig:RealPlots:P2InvA}, we see that the majority of the eigenvalues of $\widehat{\mathcal{P}}_C^{-1}\mathcal{A}$ are within the theoretical bounds \eqref{eq:SchurComplementHatBounds}, which are depicted as the red dashed lines. Figure~\ref{fig:RealPlots:P3InvP1} verifies Theorem~\ref{theorem:Eigevals} in combination with the bounds arising from the application of the Schur complement approximation \eqref{eq:PJPreconditionerExact} and the commutator argument. Lastly, we can observe the implications of Theorem~\ref{theorem:EigenvaluesInexactSchur}, in which case we can see that some of the eigenvalues are pushed outside of the bounds. 
\begin{figure}
    \centering
    \begin{subfigure}[b]{0.49\textwidth}
    \includegraphics[width=\textwidth]{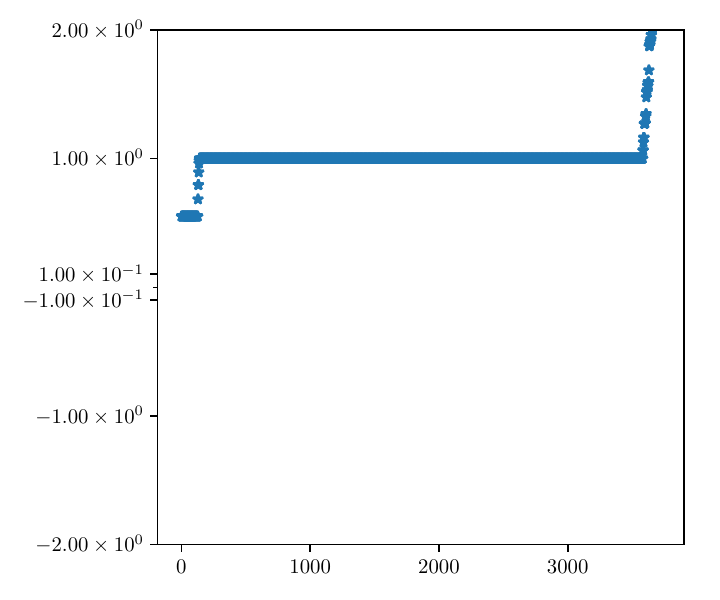}
    \caption{$\mathcal{P}_C^{-1} \mathcal{A}$}
    \label{fig:RealPlots:P1}
    \end{subfigure}
    \begin{subfigure}[b]{0.49\textwidth}
    \includegraphics[width=\textwidth]{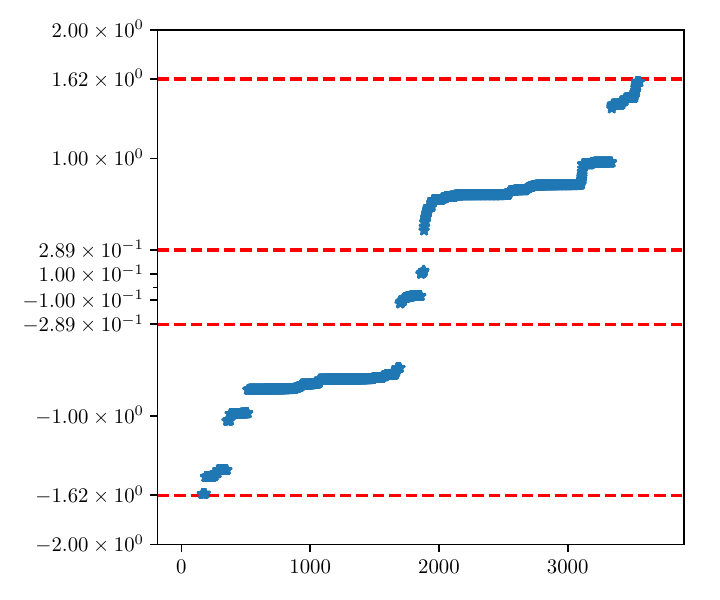}
    \caption{$\widehat{\mathcal{P}}_C^{-1} \mathcal{A}$}
    \label{fig:RealPlots:P2InvA}
    \end{subfigure}    
    \begin{subfigure}[b]{0.49\textwidth}
    \includegraphics[width=\textwidth]{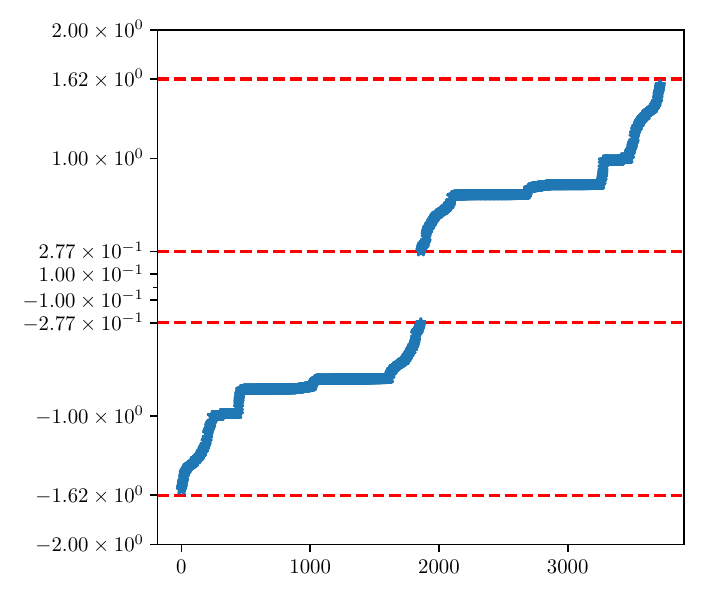}
    \caption{$\widetilde{\mathcal{P}}_C^{-1} \mathcal{P}_C$}
    \label{fig:RealPlots:P3InvP1}
    \end{subfigure}
    \begin{subfigure}[b]{0.49\textwidth}
    \includegraphics[width=\textwidth]{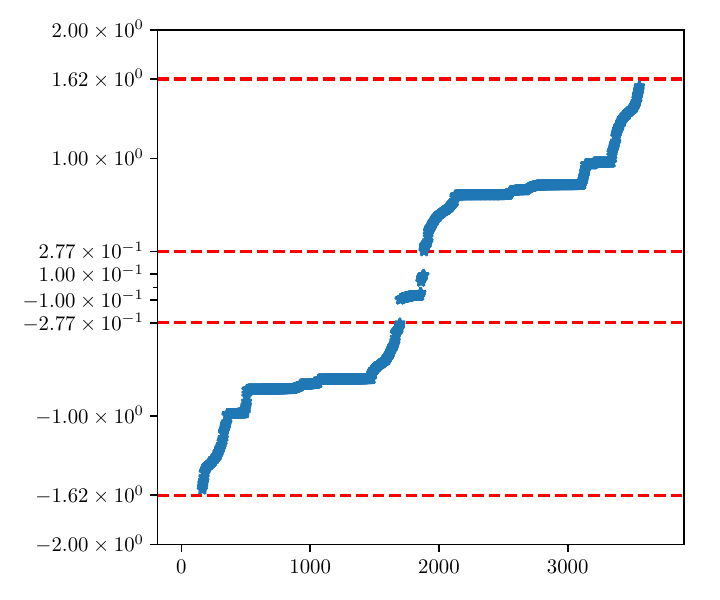}
    \caption{$\widetilde{\mathcal{P}}_C^{-1} \mathcal{A}$}
    \label{fig:RealPlots:P3InvA}
    \end{subfigure}
\caption{The (ordered) eigenvalues of the preconditioned matrices, with $n_t = 10$. The vertical axis represents the eigenvalues (or, in the top left plot, their real part) and the horizontal axis shows the index.}
\label{fig:Eigenvalues}
\end{figure}

The potential to exploit parallelizability of $\widehat{\mathcal{P}}_C$ and $\widetilde{\mathcal{P}}_C$ is constrained by how often the preconditioners have to be applied. This limitation arises from the need for communication between each application of the preconditioners. If we assume that a more accurate approximation of the subblocks leads to a reduction in the number of iterations, then there exists a trade-off between the quality of the approximation of the subblocks $G_j$ and the number of iterations required by the Krylov subspace solver.
As we will see in the experiments, the above preconditioners do not provide a sufficient level of accuracy. Thus, we will look at preconditioner designs which use the approximations described above to enhance the performance of an inner solver. 

\subsection{Nonlinear preconditioner}
Theorem~\ref{theorem:EigenvaluesInexactSchur} suggests that the performance of $\widetilde{\mathcal{P}}_C$ might be improved by narrowing the bounds $\tilde{a}$ and $\tilde{b}$, which is equivalent to solving for $G_j$ more accurately. Since we exclude the idea of using a direct solver due to its computational cost for very finely discretized problems, we propose to solve the subblocks to a certain relative tolerance only. As there is a good preconditioner $\widetilde{P}_j$ available for $G_j$, the subsystems can be solved with a preconditioned Krylov subspace method such as MINRES and GMRES, which will be called the {\it inner solver}. Accordingly, the tolerance $\varepsilon > 0$ of the subsystems will be called the {\it inner tolerance}. Since the preconditioner for the subsystems gives a good clustering of the eigenvalues, the inner solver will converge quickly and robustly with respect to the number of spatial degrees of freedom. This preconditioner will be denoted as $\widetilde{\mathcal{P}}_C^{(\text{NL})}$. 

We will call the iteration for solving the all-at-once system the {\it outer iteration}. With the linear preconditioner, we would propose using MINRES or GMRES for the outer iteration, but this is no longer possible in this case. Since the inner solver is a Krylov solver, it is nonlinear. GMRES, however, requires a linear, i.e., also constant, preconditioner. A method that can cope with alternating preconditioners in each iteration, is flexible GMRES (FGMRES), cf.~\cite{Saad1993AAlgorithm}. The pseudocode for the preconditioner is presented in Algorithm~\ref{alg:FGMRESDiag}.
\begin{algorithm}[t]
\begin{algorithmic}
    \Function{$\left(\widetilde{\mathcal{P}}_C^{(\text{NL})}\right)^{-1}$}{$r$}

    \State $\hat{r} \gets \mathcal{F}r$
    \State Permute $\hat{r}$ to arrive at block-diagonal system
    \For{$j \in \{ 1, \dots, n_t - 1 \}$}
        \State $\hat{s}_j \gets \left(T_j^{(l)}\right)^{-1} \hat{r}_j$
        \State $\hat{x}_j \gets \operatorname{GMRES}(Z_j, \widehat{P}_j, \hat{s}_j, \text{tol}=\varepsilon)$
        \State $\hat{y}_j \gets \left(T_j^{(r)}\right)^{-1} \hat{x}_j$
    \EndFor
    \State Reverse permutation of $\hat{y}$
    \State $y \gets \mathcal{F}^{-1} \hat{y}$
    \State \Return $y$
    
    \EndFunction
\end{algorithmic}
\caption{Nonlinear version of preconditioner for all-at-once system}
\label{alg:FGMRESDiag}
\end{algorithm}

It is important to remark that we should ideally not require many outer iterations, since the memory requirements of FGMRES grow linearly with the number of iterations. Let $N$ be the number of degrees of freedom for the all-at-once optimal control problem. Then, with $m$ denoting the number of outer iterations, we need to allocate memory in the order of $O(Nm)$. Considering problems with possibly billions of degrees of freedom, it becomes clear that this reaches the limits of contemporary computing facilities rather quickly. Intuitively, we would expect that if the solution of the subsystems is better, i.e., if $\varepsilon$ is smaller, fewer outer iterations are required. This would give us better control of the aforementioned bottleneck.

The nonlinear preconditioner requires one to apply the inverse of several finite element matrices. Since the exact solution of systems with these matrices can be expensive and can often be approximated without significant loss in the preconditioner's efficiency, we choose to use approximate solutions. One of the key methods here is the Chebyshev semi-iteration (cf.~\cite{Golub1961ChebyshevI}), which shows fast convergence for mass matrices due to existing robust spectral estimates for the mass matrix, see e.g., \cite{Wathen2008ChebyshevMatrix} and \cite{Wathen1987RealisticMatrix}. The other method utilized is a geometric multigrid method (MG, see \cite{Briggs2000AEdition,Hackbusch1985Multi-GridApplications}), of which various modes exist. In this work, we choose to apply such a multigrid method with successive over-relaxation for the smoothing.
An overview of where these methods are applied within the preconditioner can be found in Figure~\ref{fig:StokesSolversOverview}.
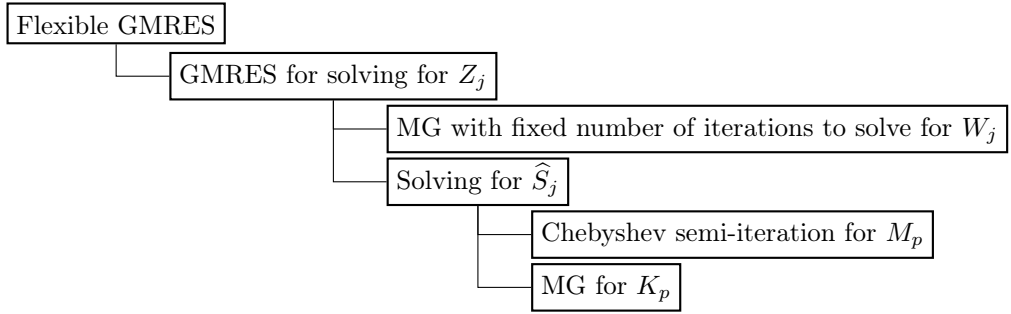
\begin{figure}
\begin{tikzpicture}[dirtree]
  \node [text height=0.8em, minimum height=1.5em] {Flexible GMRES}
    child { node {GMRES for solving for $Z_j$}
        child { node {MG with fixed number of iterations to solve for $W_j$}}
        child { node {Solving for $\widehat{S}_j$}
            child { node {Chebyshev semi-iteration for $M_p$}}
            child { node {MG for $K_p$}}
        }
    };
\end{tikzpicture}

\caption{Overview of the solvers used for the application of $\widetilde{\mathcal{P}}_C^{(\text{NL})}$. We denote the use of multigrid with `MG'.}
\label{fig:StokesSolversOverview}
\end{figure}

\section{Preconditioner for the Oseen control problem}
\label{seq:PreconditionerOseen}
Additionally to the Stokes problem, we would like to consider the more general Oseen flow control problem, i.e., the case when $w \neq 0$. Since we allowed $L$ to be non-symmetric in Section~\ref{sec:DiagonalizationFFT}, the idea of the circulant approximation of the system and its subsequent diagonalization can be transferred to the Oseen problem without any adaptations. In practice, preconditioners based on the idea of Algorithm~\ref{alg:FGMRESDiag} show promising results. We will apply the same technique to this problem and, thus, we are interested in developing an efficient preconditioner for $G_j$. 
Unfortunately, the transformation described by \eqref{eq:GJTransformation} does not apply to the Oseen problem, since it relies on the symmetry of $L$. This leaves us with developing preconditioners for $G_j$ directly rather than for the simpler matrix $Z_j$.

First, we will partition $G_j$ into a 2-by-2 block form, look at each of the blocks separately, and discuss their approximations. We define the (1,1)-block as
\begin{align*}
    G_j^{(1,1)} = \begin{pmatrix}
        \tau M & d_j^* M + \tau L^T \\
        d_j M + \tau L & -\frac{\tau}{\beta} M\\
    \end{pmatrix},
\end{align*}
which is a saddle-point system. The special form of  $G_j^{(1,1)}$ allows us to use \eqref{eq:SchurComplementPearsonWathen} in order to achieve the following robust preconditioner:
\begin{align*}
    P_j^{(1,1)} = \begin{pmatrix}
        \tau M & 0 \\
        d_j M + \tau L & \frac{1}{\tau} Q_j M^{-1} Q_j^H
    \end{pmatrix}, \quad Q_j = d_j M + \tau L + \frac{\tau}{\sqrt{\beta}} M,
\end{align*}
where $Q_j^H$ is the conjugate transpose of $Q_j$. Although the eigenvalue bounds for \eqref{eq:SchurComplementPearsonWathen} were originally established for real matrices only, it can be shown that these bounds also hold for the above complex matrices.
The Schur complement of \eqref{eq:SubBlockG} has the form
\begin{align*}
    S_j = \begin{pmatrix}
        0 & \tau B \\
        \tau B & 0
    \end{pmatrix}
    \begin{pmatrix}
        \tau M & d_j^* M + \tau L^T \\
d_j M + \tau L & -\frac{\tau}{\beta} M
    \end{pmatrix}^{-1}
    \begin{pmatrix}
        0 & \tau B^T \\
        \tau B^T & 0
    \end{pmatrix}.
\end{align*}
Leveraging the commutator argument, we derive the following approximation:
\begin{align*}
    S_j &\approx \widehat{S}_j = \begin{pmatrix}
        0 & \tau M_p \\
        \tau M_p & 0
    \end{pmatrix}
    \begin{pmatrix}
        \tau M_p & d_j^* M_p + \tau L_p^T \\
d_j M_p + \tau L_p & -\frac{\tau}{\beta} M_p
    \end{pmatrix}^{-1}
    \begin{pmatrix}
        0 & \tau K_p \\
        \tau K_p & 0
    \end{pmatrix}.
\end{align*}

Now, we have to put this together to efficiently precondition the overall system $G_j$. One idea would be to use the preconditioner \eqref{eq:SaddlePointTriangularPreconditioner} with the above approximations, which leads to
\begin{align}
\widetilde{P}_j =
    \begin{pmatrix}
        P_j^{(1,1)} & 0 \\
        G_j^{(2, 1)} & \widehat{S}_j
    \end{pmatrix}.
    \label{eq:OseenPreconditionerFirstIdea}
\end{align}
In practice, however, this preconditioner does not perform well if the approximation of the (1,1)-block is not accurate enough. Indeed, if $P_j^{(1,1)}$ in \eqref{eq:OseenPreconditionerFirstIdea} is replaced by its exact counterpart $G_j^{(1,1)}$ the performance was acceptable in experiments. This lets us conclude that a key contributor to the failure of $P_j$ is that the approximation of the (1,1)-block does not suffice for a good overall performance. We, therefore, try to improve the approximation of this block. 

One could follow the preceding procedure and nest a nonlinear Krylov subspace solver for the (1,1)-block in \eqref{eq:OseenPreconditionerFirstIdea}. Nevertheless, this would require us to use FGMRES instead of GMRES in Algorithm~\ref{alg:FGMRESDiag} and, therefore, lead to a threefold nested application of Krylov subspace solvers. The resulting additional complexity and the limited convergence theory of FGMRES gives reason to instead develop a linear approximation of $G_j^{(1,1)}$. A good candidate for that is the Uzawa iteration with a fixed number of iterations. More precisely, we use the inexact preconditioned Uzawa method or preconditioned Arrow--Hurwicz method, which is given by the following iterative form:
\begin{align}
\begin{aligned}
    x_1^{(k + 1)} &= x_1^{(k)} + \frac{1}{\tau}\widehat{M}^{-1} \left(b_1 - \tau M x_1^{(k)} - (d_j M + \tau L)^H x_2^{(k)} \right), \\
    x_2^{(k + 1)} &= x_2^{(k)} - \frac{1}{\mu} \widehat{S}^{-1} \left(b_2 - (d_j M + \tau L) x_1^{(k + 1)} + \frac{\tau}{\beta} M x_2^{(k)} \right),
\end{aligned}
\label{eq:Uzawa11}
\end{align}
for the solution of a saddle-point system of the form
\begin{align*}
    G_j^{(1,1)}
    \begin{pmatrix}
        x_1 \\ x_2
    \end{pmatrix}
    =
    \begin{pmatrix}
        b_1 \\ b_2
    \end{pmatrix},
\end{align*}
where $\widehat{M}$ is an approximation of the mass matrix and $\mu > 0$ is a parameter of the method. For a detailed discussion of the Uzawa method, we refer to \cite{Rees2010PreconditioningOptimization}. A suitable parameter choice for $\mu$ is 
\begin{align*}
    \mu = \frac{\lambda_{\min}(\widehat{S}^{-1}S) + \lambda_{\max}(\widehat{S}^{-1} S)}{2}.
\end{align*}
In our case, the minimum and maximum eigenvalues of $\widehat{S}^{-1} S$ are $1/2$ and $1$. Thus, we choose $\mu = 3/4$.
The resulting preconditioner is denoted by $\widetilde{P}_j^{(\text{UZ})}$ and the computation of its inverse is summarized in Algorithm~\ref{alg:UzawaGj}. The overall nonlinear preconditioner will be denoted by $\widetilde{\mathcal{P}}_C^{(\text{UZ})}$.
\begin{algorithm}[t]
\begin{algorithmic}
    \Function{$\left(\widetilde{P}_j^{(\text{UZ})}\right)^{-1}$}{$r$}
    \State $\begin{pmatrix}r_1^T & r_2^T & r_3^T & r_4^T\end{pmatrix}^T \gets r$
    \State $\begin{pmatrix}y_1 \\ y_2\end{pmatrix} \gets \begin{pmatrix}x_1^{(k)}\\ x_2^{(k)}\end{pmatrix}$ according to \eqref{eq:Uzawa11} with $x_1^{(0)} = x_2^{(0)} = 0$, $b_1 = r_1$, and $b_2 = r_2$
    \State $\begin{pmatrix}y_3 \\ y_4\end{pmatrix} \gets -\widehat{S}_j^{-1} \left(\begin{pmatrix} r_3 \\ r_4 \end{pmatrix} - \tau \begin{pmatrix}B y_2 \\ B y_1\end{pmatrix}\right)$
    \State \Return $\begin{pmatrix}
        y_1^T & y_2^T & y_3^T & y_4^T
    \end{pmatrix}^T$
    
    \EndFunction
\end{algorithmic}
\caption{Computation of the preconditioner $\widetilde{P}_j^{(\text{UZ})}$}
\label{alg:UzawaGj}
\end{algorithm}

Similarly to the Stokes control problem, the final form of the preconditioner does not solve for the finite element matrices exactly but uses approximations instead. Figure~\ref{fig:OseenSolversOverview} depicts the structure of the algorithm and where these approximations are used.
\begin{figure}
\begin{tikzpicture}[dirtree]
  \node [text height=0.8em, minimum height=1.5em] {Flexible GMRES}
    child { node {GMRES for solving for $G_j$}
        child { node {Uzawa for $G_j^{(1,1)}$}
            child { node {Chebyshev semi-iteration for $M$}}
            child { node {MG for $Q_j$}}
        }
        child { node {Solving for $\widehat{S}_j$}
            child { node {Chebyshev semi-iteration for $M_p$}}
            child { node {MG for $K_p$}}
        }
    };
\end{tikzpicture}

\caption{Overview of the solvers used for the application of $\widetilde{\mathcal{P}}_C^{(\text{UZ})}$.}
\label{fig:OseenSolversOverview}
\end{figure}
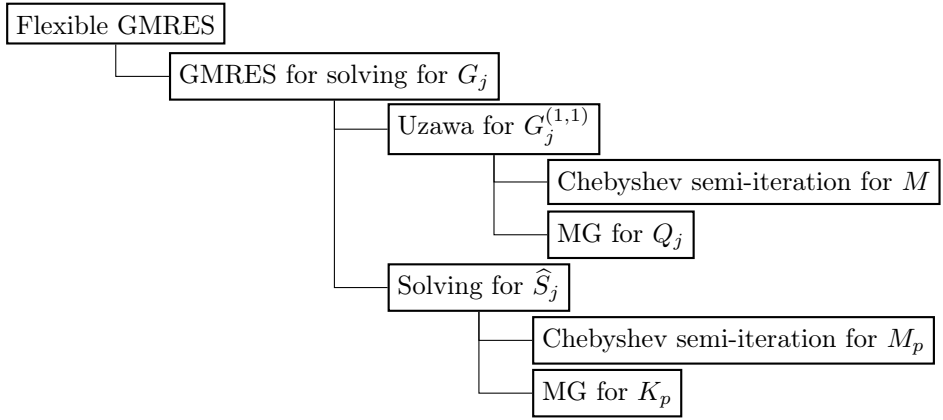

\section{Numerical experiments}
\label{seq:NumericalExperiments}
The performance of the preconditioners motivated above will now be showcased with numerical experiments. We will first analyze the robustness of the preconditioner with respect to some model parameters and the number of timesteps, followed by a presentation of the potential computational speedup through parallelization. The experiments are conducted for two-dimensional Stokes and Oseen problems on the domain $\Omega = [-1, 1]^2$, where $P_2$-$P_1$-elements are used for its approximation. We restrict ourselves to two levels of spatial resolution, the coarse one having a maximum element size of $h_c = 1 / 32$ and the finer one having $h_f = 1 / 64$.  

Within our implementation, we utilize the high-level finite element library DOLFINx \cite{Baratta2023DOLFINx:Environment} for spatial discretization. For miscellaneous numerical linear algebra, operations such as linear solvers, the libraries SciPy \cite{Virtanen2020SciPyPython} and petsc4py \cite{Dalcin2011ParallelPython} are used. 
Additionally to the functionalities for parallel computations shipped with petsc4py, the library mpi4py \cite{Dalcin2021Mpi4py:Development} is used for necessary nonstandard communication within parallel infrastructures. The experiments are run on varying numbers of processors of the model Intel(R) Xeon(R) CPU E7-4820 v2 @ 2.00GHz. 

\subsection{Stokes problem}
For the verification of the preconditioners discussed in Section~\ref{seq:PreconditionerStokes}, we consider the problem used in \cite{Leveque2022Parameter-robustControl}. The exact solution of the problem is known and it is defined by the following desired velocity state, forcing term, and exact solution when $\nu = 1$:
\begin{alignat*}{3}
v_d(x, t) &= 4 \beta \left[x_2 \left(2 \left(3 x_1^2 - 1\right)\left(x_2^2 - 1\right) + 3 \left(x_1^2 - 1\right)^2\right), -x_1 \left(3 \left(x_2^2 - 1\right)^2 + 2 \left(x_1^2 - 1\right)\left(3 x_2^2 - 1\right)\right)\right]^T \span \span \\
&\quad + e^{T - t} &&\left[20 x_1 x_2^3 + 2 \beta x_2 \left(\left(x_1^2 - 1\right)^2 \left(x_2^2 - 7\right) - 4 \left(3 x_1^2 - 1\right) \left(x_2^2 - 1\right) + 2\right) \right., \\
&\quad\quad&&\left. 5 \left(x_1^4 - x_2^4\right) - 2 \beta x_1 \left(\left(x_2^2 - 1\right)^2 \left(x_1^2 - 7\right) - 4 \left(x_1^2 - 1\right)\left(3 x_2^2 - 1\right) - 2\right)\right]^T, \\
f(x, t) &= e^{T - t} \left[-20 x_1 x_2^3 - 2 x_2 \left(x_1^2 - 1\right)^2 \left(x_2^2 - 1\right), 
5 \left(x_2^4 - x_1^4\right) + 2 x_1 \left(x_1^2 - 1\right) \left(x_2^2 - 1\right)^2\right]^T  \span \span\\
&\quad +\left[2 x_2 \left(x_1^2 - 1\right)^2 \left(x_2^2 - 1\right), -2 x_1 \left(x_1^2 - 1\right) \left(x_2^2 - 1\right)^2\right]^T,  \span \span\\
v(x, t) &= e^{T - t} \left[20 x_1 x_2^3, 5 x_1^4 - 5 x_2^4\right]^T,  \span \span \\
p(x, t) &= e^{T - t} \left(60 x_1^2 x_2 - 20 x_2^3\right) + \text{constant}, \span \span
\end{alignat*}
where the initial and boundary conditions are given implicitly by $v(x, t)$. Moreover, we used this solution to verify the correctness and convergence of the obtained discretized solutions. For our subsequent experiments, we retain the above choices of $v_d(x,t)$ and $f(x,t)$ for the desired state and forcing term.

\subsubsection{Linear preconditioner}
We will first examine the linear preconditioner $\widetilde{\mathcal{P}}_C$, which is applied within a GMRES iterative solver. The iterative solver is configured to restart every $30$ iterations. For this and all subsequent experiments, the viscosity is set to $\nu = 1\mathrm{e}{-2}$, in each application of MG $4$ V-cycles are used, and the number of Chebyshev semi-iterations for approximately applying the inverse of the mass matrix is set to $10$. Unless otherwise stated, the final time is set to $T=10$. Table~\ref{tab:StokesLinear} shows the numbers of GMRES iterations and CPU times (in seconds), for various numbers of timesteps and different values of the regularization parameter~$\beta$. In this setting, all tests were run sequentially without leveraging the parallelizability in time. For certain large numbers of timesteps, the effective runtime would have exceeded the available resources. These experiments were computed in parallel and the runtime is reported as ``$*$''. The listed number of degrees of freedom (DOFs) includes all variables for the all-at-once formulation in space and time. 

We can observe that the number of GMRES iterations grows significantly with the problem size for $\beta=1\mathrm{e}{-1}$, which is not the behaviour we are looking for in a preconditioner. For the other two values of $\beta$, the number of iterations remains in the same order independent of the problem size, which indicates that in this scenario $\widetilde{\mathcal{P}}_C$ shows some robustness concerning the number of timesteps. Furthermore, the computations are done sequentially, which means the runtime is expected to scale linearly with the overall problem size and the number of iterations of the solver. This aligns with the obtained data. The large number of iterations and the long runtime therefore incurred show the limitations in the applicability of this preconditioner.

\begin{table}
\centering
\small
\begin{tabular}{|r|r|r|r|r|r|r|r||r|r|r|r|r|r|r|r|r|r|r|r|r|}
\hline
 & \multicolumn{7}{c||}{$h=h_c$} & \multicolumn{7}{c|}{$h=h_f$} \\ \hline
    \backslashbox{$n_t$}{$\beta$} &  & \multicolumn{2}{c|}{$1\mathrm{e}{-1}$} & \multicolumn{2}{c|}{$1\mathrm{e}{-3}$}  & \multicolumn{2}{c||}{$1\mathrm{e}{-4}$} &  & \multicolumn{2}{c|}{$1\mathrm{e}{-1}$} & \multicolumn{2}{c|}{$1\mathrm{e}{-3}$}  & \multicolumn{2}{c|}{$1\mathrm{e}{-4}$}  \\ \hline
       & \#DOFs & \rotatebox{90}{GMRES~} & \rotatebox{90}{CPU [$\mathrm{s}$]}  & \rotatebox{90}{GMRES~} & \rotatebox{90}{CPU [$\mathrm{s}$]} & \rotatebox{90}{GMRES~} & \rotatebox{90}{CPU [$\mathrm{s}$]} & \#DOFs & \rotatebox{90}{GMRES~} & \rotatebox{90}{CPU [$\mathrm{s}$]}  & \rotatebox{90}{GMRES~} & \rotatebox{90}{CPU [$\mathrm{s}$]} & \rotatebox{90}{GMRES~} & \rotatebox{90}{CPU [$\mathrm{s}$]} \\ \hline
$15$ & $2.86\mathrm{e}{5}$& $59$ & $42$& $50$ & $39$& $42$ & $29$ & $1.13\mathrm{e}{6}$& $68$ & $208$& $52$ & $162$& $48$ & $186$\\ \hline 
$31$ & $5.91\mathrm{e}{5}$& $68$ & $112$& $50$ & $95$& $43$ & $62$ & $2.33\mathrm{e}{6}$& $72$ & $508$& $52$ & $358$& $48$ & $316$\\ \hline 
$63$ & $1.20\mathrm{e}{6}$& $84$ & $277$& $50$ & $163$& $42$ & $140$ & $4.73\mathrm{e}{6}$& $90$ & $1245$& $52$ & $732$& $48$ & $601$\\ \hline 
$127$ & $2.42\mathrm{e}{6}$& $134$ & $1049$& $50$ & $320$& $44$ & $272$ & $9.53\mathrm{e}{6}$& $137$ & $3788$& $54$ & $1408$& $48$ & $1347$\\ \hline 
$255$ & $4.86\mathrm{e}{6}$& $252$ & $2601$& $52$ & $570$& $44$ & $525$ & $1.91\mathrm{e}{7}$& $258$ & $13440$& $54$ & $2608$& $48$ & $2676$\\ \hline 
$511$ & $9.75\mathrm{e}{6}$& $522$ & $*$& $54$ & $*$& $44$ & $*$ & $3.83\mathrm{e}{7}$& $796$ & $*$& $66$ & $*$& $48$ & $*$\\ \hline 
$1023$ & $1.95\mathrm{e}{7}$& $1277$ & $*$& $81$ & $*$& $46$ & $*$ & $7.67\mathrm{e}{7}$& $*$ & $*$& $*$ & $*$& $*$ & $*$\\ \hline 
\end{tabular}
\caption{Solving the Stokes problem with $\widetilde{\mathcal{P}}_C$ for different parameter settings.}
\label{tab:StokesLinear}
\end{table}

\subsubsection{Nonlinear preconditioner}
\label{seq:NumericalExperimentsStokesNonlinear}
Next, we will consider the nonlinear preconditioner $\widetilde{\mathcal{P}}^{(\text{NL})}_C$. As already mentioned, we have to use FGMRES as an outer iterative solver instead of GMRES. Given the substantial size of the problem and the increased memory requirements of FGMRES, we are required to use small numbers of iterations between each restart. Experiments suggest that $10$ iterations are sufficient to ensure convergence for the considered problems. As an inner solver, we use GMRES. The inner tolerance is set to $\varepsilon = 1\mathrm{e}{-2}$, which was found to be sufficient in numerical experiments, meaning that a smaller tolerance did not lead to a noticeable reduction of the outer iterations. All other parameters are chosen identically to those used in the experiments for the linear preconditioner. Tables~\ref{tab:StokesLevel4} and \ref{tab:StokesLevel5} show convergence results for the two different levels of spatial discretization, specifically the number of outer FGMRES iterations, the average number of inner GMRES iterations (to the nearest integer), and the CPU time required. We are able to observe robustness of the number of outer iterations. Additionally, the number of inner iterations stays within the same order independent of the parameters, which ensures a linear scaling of the runtime in $n_t$. Additional experiments showed that the preconditioner can deal with smaller regimes of $\nu$ robustly. However, the MG iterations do not necessarily guarantee convergence for $\nu < 1\mathrm{e}{-3}$, which is currently the bottleneck of the solver.
\begin{table}
\centering
\small
\begin{tabular}{|r|r|r|r|r|r|r|r|r|r|r|}
\hline
    \backslashbox{$n_t$}{$\beta$} &  & \multicolumn{3}{c|}{$1\mathrm{e}{-1}$} & \multicolumn{3}{c|}{$1\mathrm{e}{-3}$}  & \multicolumn{3}{c|}{$1\mathrm{e}{-4}$}  \\ \hline
       & \#DOFs & \rotatebox{90}{FGMRES~} & \rotatebox{90}{av.\ GMRES~} & \rotatebox{90}{CPU [$\mathrm{s}$]}  & \rotatebox{90}{FGMRES~} & \rotatebox{90}{av.\ GMRES~} & \rotatebox{90}{CPU [$\mathrm{s}$]} & \rotatebox{90}{FGMRES~} & \rotatebox{90}{av.\ GMRES~} & \rotatebox{90}{CPU [$\mathrm{s}$]} \\ \hline
$15$ & $2.86\mathrm{e}{5}$& $4$ & $32$ & $59$& $3$ & $26$ & $49$& $3$ & $33$ & $52$\\ \hline 
$31$ & $5.91\mathrm{e}{5}$& $5$ & $34$ & $226$& $4$ & $26$ & $127$& $3$ & $26$ & $96$\\ \hline 
$63$ & $1.20\mathrm{e}{6}$& $5$ & $29$ & $276$& $4$ & $23$ & $173$& $4$ & $25$ & $239$\\ \hline 
$127$ & $2.42\mathrm{e}{6}$& $6$ & $29$ & $932$& $4$ & $21$ & $446$& $4$ & $21$ & $395$\\ \hline 
$255$ & $4.86\mathrm{e}{6}$& $6$ & $26$ & $1654$& $4$ & $18$ & $571$& $4$ & $18$ & $727$\\ \hline 
$511$ & $9.75\mathrm{e}{6}$& $7$ & $27$ & $*$& $4$ & $17$ & $*$& $4$ & $15$ & $*$\\ \hline 
$1023$ & $1.95\mathrm{e}{7}$& $7$ & $26$ & $*$& $5$ & $23$ & $*$& $5$ & $21$ & $*$\\ \hline 
\end{tabular}
\caption{Solving the Stokes problem with $\widetilde{\mathcal{P}}_C^{(\text{NL})}$ for $h = h_c$.}
\label{tab:StokesLevel4}
\end{table}

\begin{table}
\centering
\small
\begin{tabular}{|r|r|r|r|r|r|r|r|r|r|r|}
\hline
    \backslashbox{$n_t$}{$\beta$} &  & \multicolumn{3}{c|}{$1\mathrm{e}{-1}$} & \multicolumn{3}{c|}{$1\mathrm{e}{-3}$}  & \multicolumn{3}{c|}{$1\mathrm{e}{-4}$}  \\ \hline
       & \#DOFs & \rotatebox{90}{FGMRES~} & \rotatebox{90}{av.\ GMRES~} & \rotatebox{90}{CPU [$\mathrm{s}$]}  & \rotatebox{90}{FGMRES~} & \rotatebox{90}{av.\ GMRES~} & \rotatebox{90}{CPU [$\mathrm{s}$]} & \rotatebox{90}{FGMRES~} & \rotatebox{90}{av.\ GMRES~} & \rotatebox{90}{CPU [$\mathrm{s}$]} \\ \hline
$15$ & $1.13\mathrm{e}{6}$& $4$ & $34$ & $360$& $3$ & $35$ & $252$& $3$ & $39$ & $301$\\ \hline 
$31$ & $2.33\mathrm{e}{6}$& $4$ & $32$ & $764$& $4$ & $28$ & $641$& $3$ & $31$ & $474$\\ \hline 
$63$ & $4.73\mathrm{e}{6}$& $4$ & $30$ & $1263$& $4$ & $27$ & $890$& $4$ & $27$ & $1140$\\ \hline 
$127$ & $9.53\mathrm{e}{6}$& $5$ & $33$ & $3762$& $4$ & $23$ & $1735$& $4$ & $25$ & $2247$\\ \hline 
$255$ & $1.91\mathrm{e}{7}$& $6$ & $33$ & $8483$& $4$ & $21$ & $3631$& $4$ & $22$ & $3324$\\ \hline 
$511$ & $3.83\mathrm{e}{7}$& $6$ & $30$ & $*$& $4$ & $17$ & $*$& $4$ & $19$ & $*$\\ \hline 
$1023$ & $7.67\mathrm{e}{7}$& $6$ & $27$ & $*$& $4$ & $16$ & $*$& $4$ & $16$ & $*$\\ \hline 
\end{tabular}
\caption{Solving the Stokes problem with $\widetilde{\mathcal{P}}_C^{(\text{NL})}$ for $h = h_f$.}
\label{tab:StokesLevel5}
\end{table}

\subsection{Oseen problem}
Next, we will analyze the performance of the preconditioner for the Oseen problem introduced in Section~\ref{seq:PreconditionerOseen}. The test problem is inspired by the test problems for the Navier--Stokes equations in \cite{Leveque2022Parameter-robustControl} and are given by the following data:
\begingroup
\allowdisplaybreaks
\begin{align*}
    v_d(x, t) &= \left[ 2 x_2 \left(1 - x_1^4\right)\left(1 - x_2^4\right), -2 x_1 \left(1 - x_1^4\right)\left(1 - x_2^4\right) \right]^T \\
    &\quad + \begin{cases}
    \left[ 5 x_2 - 4, 0 \right]^T &\text{ if } x_2 \ge \frac{4}{5}, \\
    \left[0, 0\right]^T &\text{ otherwise},
    \end{cases} \\
    f(x, t) &= \left[ -20 x_1 x_2^3 - 2 x_2 \left(x_1^2 - 1\right)^2 \left(x_2^2 - 1\right), 5 \left(x_2^4 - x_1^4\right) + 2 x_1 \left(x_1^2 - 1\right) \left(x_2^2 - 1\right)^2 \right]^T, \\
    v_0(x) &= \begin{cases}
        [1, 0]^T &\text{ if } x_1 > -x_2 \text{ and } x_1 < x_2, \\
        [0, 0]^T &\text{ otherwise},
    \end{cases} \\
    h(x, t) &= \begin{cases}
        [1, 0]^T &\text{ if } x_2 = 1, \\
        [0, 0]^T &\text{ otherwise},
    \end{cases} \\
    w(x) &= \begin{cases}
        c_1(x) \left[\left(\frac{100}{99}\right)^2 x_2, -\left(\frac{100}{49}\right)^2 \left(x_1 - \frac{1}{2}\right)\right]^T &\text{ if } c_1(x) \ge 0, \\
        c_2(x) \left[\left(-\frac{100}{99}\right)^2 x_2, \left(\frac{100}{49}\right)^2 \left(x_1 - \frac{1}{2}\right)\right]^T &\text{ if } c_2(x) \ge 0, \\
        [0, 0]^T &\text{ otherwise},
    \end{cases} \\
    c_1(x) &= 1 - \sqrt{\left(\frac{100}{49} \left(x_1 - \frac{1}{2}\right)\right)^2 + \left(\frac{100}{99} x_2\right)^2},\quad
    c_2(x) = 1 - \sqrt{\left(\frac{100}{49} \left(x_1 + \frac{1}{2}\right)\right)^2 + \left(\frac{100}{99} x_2\right)^2}.
\end{align*}
\endgroup
This is a lid-driven cavity problem, where the wind has two vortices and the desired state describes a velocity field with one vortex.

\subsubsection{Nonlinear preconditioner}
The preconditioner $\widetilde{\mathcal{P}}_C^{(\text{UZ})}$ introduced in Section~\ref{seq:PreconditionerOseen} is applied within the same iterative solver and associated settings as in Section~\ref{seq:NumericalExperimentsStokesNonlinear}. The number of iterations for the inner Uzawa method is set to $6$. Tables~\ref{tab:OseenLevel4} and \ref{tab:OseenLevel5} show a summary of the numerical results. Also for the Oseen problem, we observe robustness of the number of outer iterations as well as of the number of inner iterations. Accordingly, the total runtime scales linearly in $n_t$. Furthermore, convergence of the inner solver is achieved within few iterations, which suggests that $\widetilde{P}_j^{(\text{UZ})}$ is a good candidate for preconditioning the subblocks.

\begin{table}
\centering
\small
\begin{tabular}{|r|r|r|r|r|r|r|r|r|r|r|}
\hline
    \backslashbox{$n_t$}{$\beta$} &  & \multicolumn{3}{c|}{$1\mathrm{e}{-1}$} & \multicolumn{3}{c|}{$1\mathrm{e}{-3}$}  & \multicolumn{3}{c|}{$1\mathrm{e}{-4}$}  \\ \hline
       & \#DOFs & \rotatebox{90}{FGMRES~} & \rotatebox{90}{av.\ GMRES~} & \rotatebox{90}{CPU [$\mathrm{s}$]}  & \rotatebox{90}{FGMRES~} & \rotatebox{90}{av.\ GMRES~} & \rotatebox{90}{CPU [$\mathrm{s}$]} & \rotatebox{90}{FGMRES~} & \rotatebox{90}{av.\ GMRES~} & \rotatebox{90}{CPU [$\mathrm{s}$]} \\ \hline
$15$ & $2.86\mathrm{e}{5}$& $5$ & $7$ & $114$& $3$ & $3$ & $35$& $3$ & $5$ & $51$\\ \hline 
$31$ & $5.91\mathrm{e}{5}$& $6$ & $6$ & $260$& $4$ & $4$ & $116$& $3$ & $4$ & $98$\\ \hline 
$63$ & $1.20\mathrm{e}{6}$& $8$ & $6$ & $1028$& $4$ & $4$ & $157$& $4$ & $4$ & $399$\\ \hline 
$127$ & $2.42\mathrm{e}{6}$& $9$ & $6$ & $1562$& $4$ & $4$ & $467$& $4$ & $4$ & $502$\\ \hline 
$255$ & $4.86\mathrm{e}{6}$& $9$ & $5$ & $2539$& $5$ & $4$ & $1384$& $4$ & $4$ & $1002$\\ \hline 
$511$ & $9.75\mathrm{e}{6}$& $10$ & $5$ & $*$& $7$ & $4$ & $*$& $5$ & $5$ & $*$\\ \hline 
$1023$ & $1.95\mathrm{e}{7}$& $10$ & $5$ & $*$& $8$ & $4$ & $*$& $6$ & $6$ & $*$\\ \hline 
\end{tabular}
\caption{Solving the Oseen problem with $\widetilde{\mathcal{P}}_C^{(\text{UZ})}$ for $h = h_c$.}
\label{tab:OseenLevel4}
\end{table}

\begin{table}
\centering
\small
\begin{tabular}{|r|r|r|r|r|r|r|r|r|r|r|}
\hline
    \backslashbox{$n_t$}{$\beta$} &  & \multicolumn{3}{c|}{$1\mathrm{e}{-1}$} & \multicolumn{3}{c|}{$1\mathrm{e}{-3}$}  & \multicolumn{3}{c|}{$1\mathrm{e}{-4}$}  \\ \hline
       & \#DOFs & \rotatebox{90}{FGMRES~} & \rotatebox{90}{av.\ GMRES~} & \rotatebox{90}{CPU [$\mathrm{s}$]}  & \rotatebox{90}{FGMRES~} & \rotatebox{90}{av.\ GMRES~} & \rotatebox{90}{CPU [$\mathrm{s}$]} & \rotatebox{90}{FGMRES~} & \rotatebox{90}{av.\ GMRES~} & \rotatebox{90}{CPU [$\mathrm{s}$]} \\ \hline
$15$ & $1.13\mathrm{e}{6}$& $4$ & $6$ & $306$& $3$ & $4$ & $159$& $3$ & $5$ & $186$\\ \hline 
$31$ & $2.33\mathrm{e}{6}$& $6$ & $9$ & $1263$& $4$ & $5$ & $503$& $3$ & $5$ & $281$\\ \hline 
$63$ & $4.73\mathrm{e}{6}$& $6$ & $7$ & $2167$& $4$ & $4$ & $1097$& $3$ & $5$ & $870$\\ \hline 
$127$ & $9.53\mathrm{e}{6}$& $7$ & $7$ & $4724$& $4$ & $5$ & $1999$& $4$ & $4$ & $1489$\\ \hline 
$255$ & $1.91\mathrm{e}{7}$& $8$ & $6$ & $9507$& $5$ & $5$ & $4275$& $4$ & $4$ & $4158$\\ \hline 
$511$ & $3.83\mathrm{e}{7}$& $9$ & $5$ & $*$& $6$ & $4$ & $*$& $4$ & $5$ & $*$\\ \hline 
$1023$ & $7.67\mathrm{e}{7}$& $9$ & $5$ & $*$& $7$ & $4$ & $*$& $5$ & $5$ & $*$\\ \hline 
\end{tabular}
\caption{Solving the Oseen problem with $\widetilde{\mathcal{P}}_C^{(\text{UZ})}$ for $h = h_f$.}
\label{tab:OseenLevel5}
\end{table}

\begin{figure}
\centering
\begin{subfigure}[b]{0.49\textwidth}
\includegraphics[width=\textwidth]{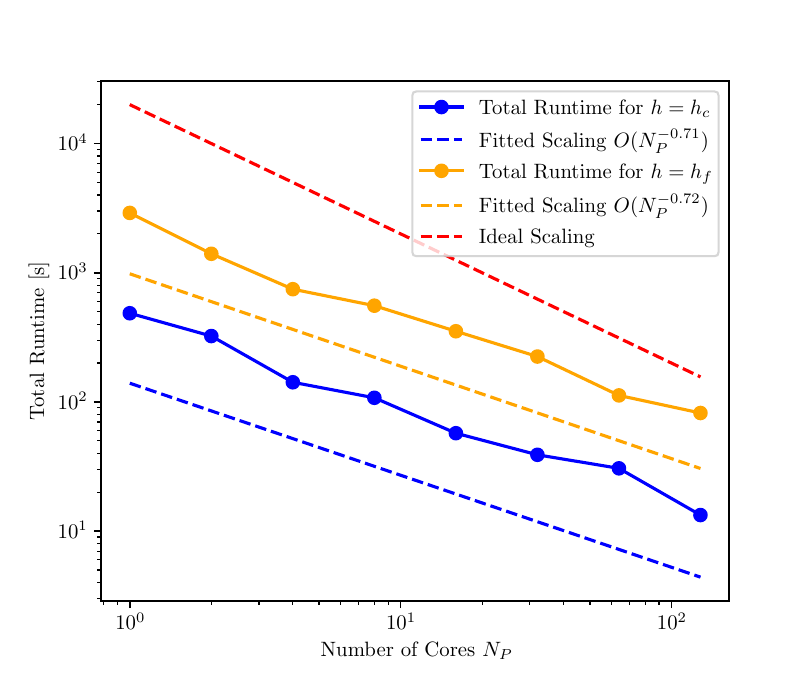}
\caption{Stokes problem}
\label{fig:StrongScaling:Stokes}
\end{subfigure}
\begin{subfigure}[b]{0.49\textwidth}
\includegraphics[width=\textwidth]{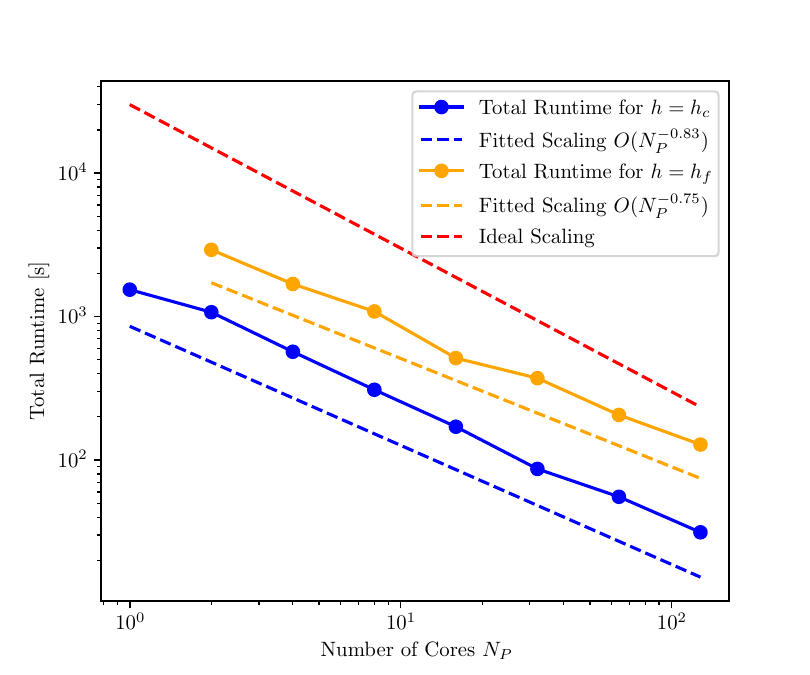}
\caption{Oseen problem}
\label{fig:StrongScaling:Oseen}
\end{subfigure}
\caption{Strong runtime scaling with respect to available number of cores $N_P$.}
\label{fig:StrongScaling}
\end{figure}

\subsection{Parallel implementation}
To test the preconditioners' potential for parallelizability, we present results showing how the runtime of a parallel implementation scales. Based on the preceding results, we will restrict ourselves to the discussed nonlinear preconditioners only. The parallel implementation leverages petsc4py and mpi4py for communication, and the solution of each subblock is carried out in parallel. It is noted that the computation of the FFT is not parallelized. Nevertheless, we are still able to observe consistent scaling since the overall runtime is mainly determined by the solution of individual subblocks rather than the FFT computations.

First, we will consider strong scaling. We set $n_t = 256$, $\beta = 1\mathrm{e}{-3}$, and $T = 5$, and the inner tolerance is set to $\varepsilon = 1\mathrm{e}{-2}$. Figure~\ref{fig:StrongScaling} shows how the runtime scales with the number of cores $N_P$ available to the solver. For the Stokes problem, we achieve scalings of $O(N_P^{-0.71})$ and $O(N_P^{-0.72})$. The runtime scalings for the Oseen problem are slightly better with approximately $O(N_P^{-0.83})$ and $O(N_P^{-0.75})$. This can be explained by the increased complexity of the subblocks, resulting in a longer runtime for solving each of them. The loss in efficiency due to communication and the application of the FFT becomes less significant. A snapshot of the solution to the considered benchmark problem is depicted in Figure~\ref{fig:OseenSampleSolution}. Therein, the timestamp is fixed to $t = T (n_t - 1)/n_t$ and the computations are done for $n_t = 31$.
\begin{figure}
    \centering
    \begin{subfigure}{0.49\textwidth}
\includegraphics[width=\textwidth]{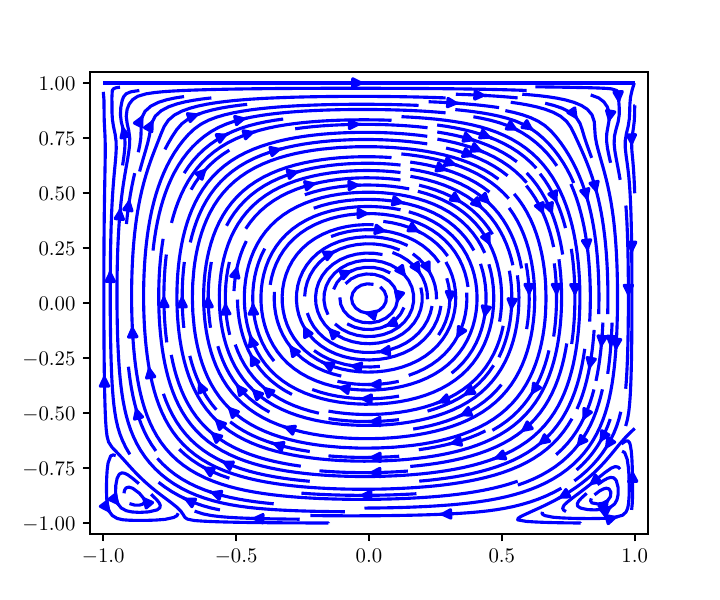}
\caption{$v(x, t)$}
    \end{subfigure}
    \begin{subfigure}{0.49\textwidth}
\includegraphics[width=\textwidth]{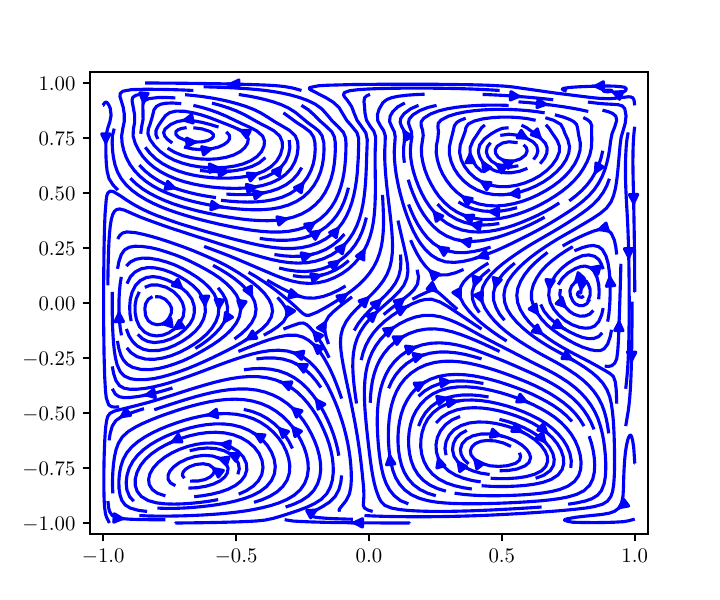}
\caption{$u(x, t)$}
    \end{subfigure}
    \begin{subfigure}{0.49\textwidth}
\includegraphics[width=\textwidth]{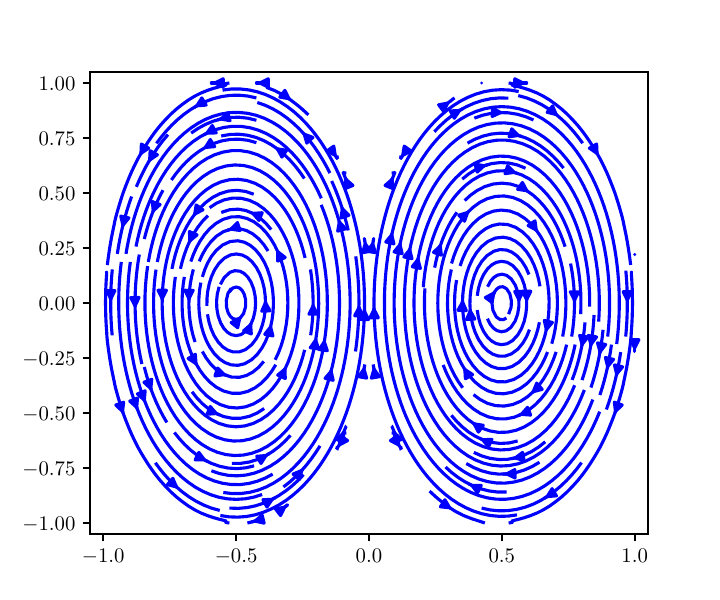}
\caption{$w(x)$}
    \end{subfigure}
    \begin{subfigure}{0.49\textwidth}
\includegraphics[width=\textwidth]{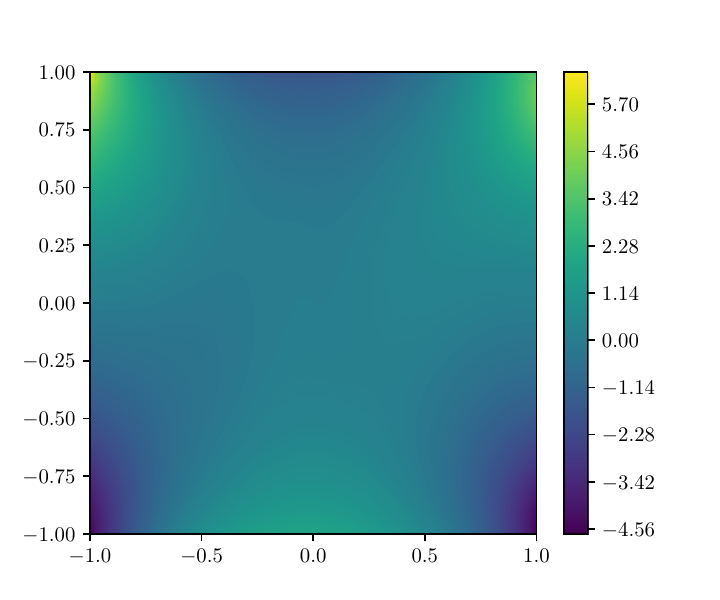}
\caption{$p(x, t)$}
    \end{subfigure}
\caption{Snapshot of the solution to the problem used for benchmarking the strong scaling at timestamp $t = T (n_t - 1)/n_t$.}
\label{fig:OseenSampleSolution}
\end{figure}

Additionally, we consider weak scaling. The standard approach here is to fix all model parameters except for $n_t$ and $N_P$ with the relation $n_t = a N_P$ for some constant $a \in \mathbb{N}$. However, we note it was observed that the first subblock requires significantly longer to solve than the others, so the workload for the first core is reduced by one subblock, i.e., the first core is assigned $a - 1$ subblocks and the others $a$ subblocks. Consequently, for our analysis we set $n_t = a N_P - 1$. The remaining parameters are kept the same as in the previous experiments, with the regularization parameter set to $\beta = 1\mathrm{e}{-3}$. In the case of ideal scaling, we would expect the runtime not to increase significantly with growing $n_t$. Table~\ref{tab:WeakScaling} shows the runtime of the solver for the Stokes and Oseen problem with coarse and fine spatial discretizations. With a factor of $2.87$, the Oseen problem with $h = h_f$ shows the strongest relative increase in runtime from the smallest setting of $n_t$ to the biggest. Hence, a $68.2$ times bigger problem is solved at the cost of a $2.87$ times increase in runtime in the worst-case scenario.

In summary, we have demonstrated that the preconditioners show good strong and weak scaling properties for the Stokes and Oseen problem. Thus, we can solve these large-scale problems efficiently with increasing computing power.

\begin{table}
\centering
\small
    \begin{tabular}{|c||r|r|r|r|r|r|r|} \hline
$n_t$ & $15$ & $31$ & $63$ & $127$ & $255$ & $511$ & $1023$\\ \hline
& \multicolumn{7}{c|}{$h=h_c$} \\ \hline
\# DOFs & $2.86\mathrm{e}{5}$ & $5.91\mathrm{e}{5}$ & $1.20\mathrm{e}{6}$ & $2.42\mathrm{e}{6}$ & $4.86\mathrm{e}{6}$ & $9.75\mathrm{e}{6}$ & $1.95\mathrm{e}{7}$\\ \hline\hline
Stokes & $30$ & $30$ & $32$ & $36$ & $37$ & $56$ & $78$\\ \hline 
Oseen & $46$ & $53$ & $58$ & $64$ & $48$ & $77$ & $72$\\ \hline 
& \multicolumn{7}{c|}{$h=h_f$} \\ \hline
\# DOFs & $1.13\mathrm{e}{6}$ & $2.33\mathrm{e}{6}$ & $4.73\mathrm{e}{6}$ & $9.53\mathrm{e}{6}$ & $1.91\mathrm{e}{7}$ & $3.83\mathrm{e}{7}$ & $7.67\mathrm{e}{7}$\\ \hline \hline
Stokes & $135$ & $141$ & $162$ & $206$ & $295$ & $307$ & $388$\\ \hline 
Oseen & $181$ & $232$ & $255$ & $255$ & $261$ & $303$ & $356$\\ \hline 
    \end{tabular}
\caption{Runtime in seconds of solver for Stokes and Oseen problem. The number of processors is implicitly given by $n_t = 8 N_P - 1$.}
\label{tab:WeakScaling}
\end{table}

\section{Conclusion}
\label{seq:Conclusion}
In this work, we introduced preconditioners for the efficient iterative solution of unsteady Stokes and Oseen control problems. The preconditioners approximate the original problem by its time-periodic equivalent, allowing us to perform a temporal diagonalization and achieve parallel-in-time solvers. The first preconditioner leveraged existing approximations of Stokes problems in order to create a linear preconditioner. To increase efficiency within parallel infrastructures, we introduced additional preconditioners that made use of nested Krylov subspace solvers, and we achieved rapid convergence with these solvers, including for very large problems. Our approach also demonstrated significant robustness with respect to model parameters and the discretization level, as well as very good strong and weak scaling properties.

A pressing challenge for parallel-in-time methods in general is their applicability to nonlinear PDEs. As the application of nonlinear solvers results in time-varying problems, in the case of the problem structures considered in this work this would add the challenge that the diagonalization cannot be carried out in the same way, although mildly nonlinear equations might exhibit fast convergence with modifications of the methods presented. Future work will therefore involve adapting the preconditioners derived in this work to mildly as well as highly nonlinear autonomous flow control problems.

\section{Acknowledgements}
This work has made use of the resources provided by the Edinburgh Compute and Data Facility (ECDF) (\url{http://www.ecdf.ed.ac.uk/}). BH was supported by the MAC-MIGS Centre for Doctoral Training under EPSRC grant EP/S023291/1. JWP was supported by the EPSRC grant EP/S027785/1.

\section{References}
\printbibliography[heading=none]

\end{document}